\theoremstyle{plain}
\newtheorem{theorem}{Theorem}[section]
\newtheorem{lemma}[theorem]{Lemma}
\newtheorem{definition-theorem}[theorem]{Definition-Theorem}
\newtheorem{proposition}[theorem]{Proposition}
\newtheorem{corollary}[theorem]{Corollary}
\theoremstyle{definition}
\newtheorem{definition}[theorem]{Definition}
\newtheorem{example}[theorem]{Example}
\newtheorem{remark}[theorem]{Remark}
\newtheorem{notation}[theorem]{Notation}
\newcommand \bth[1] { \begin{theorem}\label{t#1} }
\newcommand \ble[1] { \begin{lemma}\label{l#1} }
\newcommand \bpr[1] { \begin{proposition}\label{p#1} }
\newcommand \bco[1] { \begin{corollary}\label{c#1} }
\newcommand \bde[1] { \begin{definition}\label{d#1}\rm }
\newcommand \bex[1] { \begin{example}\label{e#1}\rm }
\newcommand \bre[1] { \begin{remark}\label{r#1}\rm }
\newcommand \bnota[1] {\begin{notation}\label{n#1}\rm }
\newcommand {\ele} { \end{lemma} }
\newcommand {\epr} { \end{proposition} }
\newcommand {\eco} { \end{corollary} }
\newcommand {\ede} { \end{definition} }
\newcommand {\eex} { \end{example} }
\newcommand {\ere} { \end{remark} }
\newcommand {\enota} { \end{notation} }
\begin{document}
\title[Geometric lemma]{Dual of the Geometric Lemma and the Second Adjointness Theorem for $p$-adic reductive groups} 

\author[Kei Yuen Chan]{Kei Yuen Chan}
\address{Department of Mathematics, the University of Hong Kong \\}
\email{kychan1@hku.hk}
\maketitle

\begin{abstract}
Let $P,Q$ be standard parabolic subgroups of a $p$-adic reductive group $G$. We investigate the smooth dual of the filtration on a parabolically induced module arising from the geometric lemma associated with the cosets $P\setminus G/Q$. We prove that the dual filtration coincides with the filtration associated with the cosets $P\setminus G/Q^-$ via the Bernstein-Casselman canonical pairing from the second adjointness of parabolic induction. This result generalizes a result of Bezrukavnikov-Kazhdan on an explicit description of maps in the second adjointness. Our main input of the proof is group theoretic results on Richardson varieties. We also discuss some applications to Bernstein-Zelevinsky derivatives.
\end{abstract}

\section{Introduction}

Let \( G \) be a connected reductive group over a non-Archimedean local field \( F \). All the representations of \( G \) that we consider in this article are smooth and over \( \mathbb{C} \), unless otherwise specified. The parabolic induction and Jacquet functor form a pair of adjoint functors and are fundamental tools in the representation theory of reductive \( p \)-adic groups. The Bernstein geometric lemma describes how to decompose the Jacquet functors of parabolically induced modules, which is analogous to the Mackey decomposition in finite groups.

It follows from definitions that the parabolic induction has the Jacquet functor as its natural left adjoint. Casselman (see \cite{Ca93}) discovered that the opposite Jacquet functor is the right adjoint to parabolic induction for admissible representations. Extending this to all smooth representations is highly non-trivial, first demonstrated by Bernstein \cite{Be92} using his stabilization theorem. This later adjointness is commonly referred to as the second adjointness theorem. The geometric perspective of the second adjointness is explored in \cite{BK15}.

This paper studies interplay between the second adjointness and the geometric lemma. To state results more precisely, we need more notations. Let $P$ be a standard parabolic subgroup of $G$ with the Levi decomposition $MU$, where $M$ is the Levi subgroup of $P$ and $U$ is the unipotent radical of $P$. For a smooth representation $\sigma$ of $M$, define $\mathrm{Ind}_P^G\sigma$ to be the normalized parabolic induction; and for a smooth representation $\pi$ of $G$, define $\pi_U$ to be the normalized Jacquet functor. 

Let $Q$ be another standard parabolic subgroup of $G$ with the Levi decomposition $NV$, where $N$ is the Levi subgroup of $Q$ while $V$ is the unipotent subgroup of $Q$. The geometric lemma describes a decomposition for 
\[   (\mathrm{Ind}_P^G\sigma)_V
\]
as an $N$-representation. For a smooth representation $\pi$ of $G$, we denote by $\pi^{\vee}$ the smooth dual of $\pi$. By taking the exact contravariant dual smooth functor (as $N$-representations), we then obtain one filtration on $((\mathrm{Ind}_P^G\sigma)_V)^{\vee}$. 

Let $Q^-$ be the parabolic subgroup opposite to $Q$ and let $V^-$ be the  unipotent radical of $Q^-$. The second adjointness provides a natural isomorphism:
\[                 ((\mathrm{Ind}_P^G\sigma)_V)^{\vee} \cong ((\mathrm{Ind}_P^G\sigma)^{\vee})_{V^-} .
\]
The work of Bernstein and Casselman shows a more explicit non-degenerate $N$-equivariant pairing for $(\mathrm{Ind}_P^G\sigma)_V$ and $((\mathrm{Ind}_P^G\sigma)^{\vee})_{V^-}$, which we shall refer to as the Bernstein-Casselman canonical pairing. There is a natural isomorphism between $(\mathrm{Ind}_P^G\sigma)^{\vee}$ and $\mathrm{Ind}_P^G(\sigma^{\vee})$, allowing us to obtain a filtration on the space $((\mathrm{Ind}_P^G\sigma)^{\vee})_{V^-}=(\mathrm{Ind}_P^G(\sigma^{\vee}))_{V^-}$, using the orbits in $P\setminus G/Q^-$.

More precise definitions for the filtrations are provided in Sections \ref{ss geometric lemma} through \ref{ss dual geometric lemma}. We note that to match the two filtrations, the compatibility of the Bruhat orderings discussed in Section \ref{s weyl groups} is essential. Our main result is the following:

\begin{theorem} \label{thm coincide filtrations}
The two filtrations above coincide via the Bernstein-Casselman canonical pairing. 
\end{theorem}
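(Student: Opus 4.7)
The plan is to show that the Bernstein-Casselman pairing realizes the two geometric-lemma filtrations as exact annihilators of one another, by a stratum-by-stratum analysis. I would first fix an ordering $g_1,\ldots,g_r$ of representatives of $P\backslash G/Q$ such that each truncation $X_i := \bigsqcup_{j\le i}Pg_jQ$ is closed in $G$. Using the natural bijection $Pg_jQ \leftrightarrow Pg_jQ^-$, I would take the reversed ordering on $P\backslash G/Q^-$ and set $Y_i := \bigsqcup_{j\ge i} Pg_jQ^-$. Writing $F_i$ for the step of the geometric-lemma filtration on $(\mathrm{Ind}_P^G\sigma)_V$ arising from sections supported on $X_i$, and $\tilde F_i$ for the step on $(\mathrm{Ind}_P^G\sigma^\vee)_{V^-}$ arising from sections supported on $Y_i$, the theorem reduces to the two statements that (i) $\{F_i, \tilde F_{i+1}\} = 0$ under the Bernstein-Casselman pairing, and (ii) the induced pairing on the associated gradeds $F_i/F_{i-1}$ and $\tilde F_i/\tilde F_{i+1}$ is perfect.

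For step (i), I would use the integral expression $\{\bar f, \overline{f^\vee}\} = \int_{V_n}\langle \pi(v)f, f^\vee\rangle\, dv$, valid once the compact open subgroup $V_n\subset V$ is chosen sufficiently large (this stabilizes by the Bernstein-Casselman stabilization argument). If $f$ is supported in $X_i$ and $f^\vee$ in $Y_{i+1}$, the goal is to prove that the integrand vanishes pointwise once $V_n$ is large enough. This is a purely group-theoretic assertion about how $V$-translation interacts with the two stratifications: for $v$ deep enough in $V$, the $V$-translate of a section supported in $Pg_kQ^-$ with $k>i$ should be carried off $X_i$. The group-theoretic results flagged in the abstract should furnish exactly this, phrased as a localization of the relative Bruhat-type decomposition of $G$ with respect to $P$ and $Q, Q^-$.

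For step (ii), I would localize to a single pair $(Pg_iQ, Pg_iQ^-)$. The geometric-lemma subquotients have the explicit form $\mathrm{ind}^N_{N\cap g_i^{-1}Pg_i}\bigl((\sigma^{g_i})_{U^{g_i}\cap N}\bigr)$ and its counterpart for $Q^-$, and on these strata the Bernstein-Casselman pairing collapses to a pairing between compactly-induced representations of $N$ from conjugate-opposite parabolics. Non-degeneracy then reduces to Frobenius duality together with the known second-adjointness pairing at the Levi level, which one may feed in as an inductive hypothesis (on the semisimple rank of $G$) or verify directly because the two subquotients are compactly induced from matched data.

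The main obstacle is the vanishing asserted in step (i). Neither stratification is $V$-stable, and the sections of $\mathrm{Ind}_P^G\sigma$ are compactly supported only modulo $P$ rather than on $G$ itself, so tracking the joint support condition under $V$-translation requires careful geometric control over how $P$-cosets inside $Pg_jQ$ are moved by $V$ relative to $Pg_kQ^-$ for $k\ne j$. This is precisely where the group-theoretic preliminaries of the paper will play the decisive role; the remainder of the argument should then follow from the structural features of the Bernstein-Casselman pairing and standard Frobenius reciprocity.
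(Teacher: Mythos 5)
Your overall architecture is the same as the paper's: prove orthogonality between complementary filtration steps, then non-degeneracy on the matched graded pieces $PwQ\leftrightarrow PwQ^-$, with the decisive group-theoretic input being the intersection pattern of $P\dot{w}Q$ with $P\dot{w}'Q^-$ (the paper proves, via Deodhar's analysis of $B\dot{w}B\cap B\dot{w}'B^-$, that $P\dot{w}Q\cap P\dot{w}'Q^-\neq\emptyset$ iff $w'\leq w$). However, your step (i) — the heart of the argument — rests on a mechanism that does not work. First, a set-up problem: sections of $\mathrm{Ind}_P^G\sigma$ supported on a \emph{closed} union of cells such as your $X_i$ form the zero space (the support of a locally constant section is clopen, and a proper closed union of $(P,Q)$-cosets has empty interior); the subobjects of the geometric lemma are the sections supported on \emph{open} unions, which on the $Q$-side are the downward-closed-in-codimension unions $\bigsqcup_{w'\geq w}P\dot{w}'Q$ and on the $Q^-$-side are the unions $\bigsqcup_{w'\leq w}P\dot{w}'Q^-$ (the closure order reverses for opposite cells). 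Once the filtrations are set up correctly, the two open sets whose images you must show are orthogonal, namely $\bigsqcup_{j\geq i+1}P\dot{w}_jQ$ and $\bigsqcup_{j\leq i}P\dot{w}_jQ^-$, are \emph{not} disjoint (they meet whenever $w_{j''}\leq w_{j'}$ for some $j''\leq i<j'$, e.g.\ $w_{j''}=e$), so no pointwise support argument on arbitrary lifts can succeed. Second, your proposed rescue — that right translation by $v$ deep in $V$ carries a section supported on one stratification off the other — is vacuous: every union of $(P,Q)$-double cosets is stable under right translation by $V=U_Q$, so $V$-translation moves nothing. Relatedly, the formula $\int_{V_n}\langle\pi(v)f,f^\vee\rangle\,dv$ is insensitive to replacing $f$ by another lift once $V_n$ is large, but not to replacing $f^\vee$; making it well defined on $(\pi^\vee)_{V^-}$ is exactly where the canonical lift and Bernstein's stabilization enter, and this is the point your sketch does not engage with.

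What is actually needed, and what the paper supplies, are two support-control statements that replace your step (i): (a) the \emph{canonical} lift (defined via the stable operator $\pi_K(a^{-k})$ for $a$ strictly dominant) of an element in the $i$-th step $\mathcal J_i^-$ of the $Q^-$-filtration remains supported in $\mathcal O_i^-=\bigsqcup_{j\leq i}P\dot{w}_jQ^-$; this is proved by showing $Ka^{-k}K\subset QU_Q^-$ and invoking the Bruhat-cell intersection result to see that convolution by $\mathrm{ch}_{Ka^{-k}K}$ cannot enlarge the support beyond the cells $P\dot{w}'Q^-$ with $w'\leq w$; and (b) an element of $\mathcal J_{i+1}$ admits \emph{some} lift whose support lies in $\bigsqcup_{j\geq i+1}P\dot{w}_jM_QK$, hence misses $\mathcal O_i^-$; this lift is built by cutting an arbitrary lift into pieces and translating each piece by an element of $U_Q$ (which does not change its image in the Jacquet module). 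Only after (a) and (b) does the characterization of the canonical pairing as $\langle x,f\rangle_{M_Q}=\langle x',\widetilde f\rangle$ (canonical lift on one side, arbitrary lift on the other) reduce the orthogonality to disjointness of supports in the naive integral over $P\backslash G$. Your step (ii) is workable in spirit, though the paper argues more directly with an explicit bump function supported on a single coset $P\dot{w}_jqK'$ rather than by identifying the graded pairing with a second-adjointness pairing at the Levi level. As it stands, the proposal is missing the canonical-lift support analysis and the $U_Q$-translation construction, without which step (i) fails.
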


This statement is significant and our primary contribution is an application of Richardson varieties to the study of $p$-adic groups, which enhances the transparency of our proof. Despite their potential, Richardson varieties have received limited attention in the context of representation theory for $p$-adic groups. In addition, we validate a criterion related to the intersection of Bruhat cells. For a precise formulation, we direct the reader to Proposition \ref{prop intersection and bruhat}.


We now briefly outline the proof of Theorem \ref{thm coincide filtrations}. The Bernstein-Casselman canonical pairing for \((\mathrm{Ind}_P^G\sigma)_V\) and \(((\mathrm{Ind}_P^G\sigma)^{\vee})_{V^-}\) is expressed in terms of the original pairing between \(\mathrm{Ind}_P^G\sigma\) and its smooth dual. A crucial component is the canonical lifting defined by the stabilization theorem. Utilizing the group-theoretic properties presented in Section \ref{s intersect bruhat cells}, we demonstrate that the canonical lift behaves well under the filtration arising from the geometric context (Proposition \ref{prop canoniclal lift geom filt}).


We now provide motivation for Theorem \ref{thm coincide filtrations}. Let $P$ be a parabolic subgroup of $G$ and let $P^-$ be its opposite parabolic subgroup. We write $P=MU$ for its Levi decomposition, where $M$ is the Levi subgroup and $U$ is the unipotent radical of $P$. Let $\sigma$ be an irreducible representation of $M$ and let $\pi$ be a quotient of $\mathrm{Ind}_P^G\sigma$. We have the following composition of maps:
\[  C_c^{\infty}(  PP^-, \sigma) \hookrightarrow \mathrm{Ind}_P^G\sigma \twoheadrightarrow \pi,
\]
where $C_c^{\infty}( PP^-, \sigma)$ is the subspace of functions in $\mathrm{Ind}_P^G\sigma$ that are compactly supported in the open subspace $P\setminus PP^-$. Taking the Jacquet functor with respect to $U^-$ yields a map $\sigma \rightarrow \pi_{U^-}$, accounting for the adjoint map in the context of second adjointness.  This map is explicitly studied in the work of Bezrukavnikov-Kazhdan \cite{BK15} from a geometric viewpoint. Theorem \ref{thm coincide filtrations} recovers this map (see Section \ref{ss adjointness map}) and can thus be viewed as a generalization of the description of the adjoint map. Our original motivation stems from questions regarding derivatives, and related discussions are presented in Section \ref{s consequences}. One interesting consequence is Corollary \ref{cor descending derivatives}.

Finally, we note that it is entirely reasonable to pose similar questions in the context of affine Hecke algebras or graded Hecke algebras. These settings are less technical and can primarily be derived from Section \ref{s weyl groups}. We will not delve into the details here. For the statement of second adjointness in these relevant contexts, the reader may refer to \cite{BC15}. Additional background on affine Hecke algebras can be found in \cite{So21}.

This paper is organized as follows. Section \ref{s weyl groups} reviews key properties of Weyl groups, including the Bruhat ordering. Section \ref{s intersect bruhat cells} examines intersections of Bruhat cells. Section \ref{s bc canonical pairing} discusses the Bernstein-Casselman pairing. Section \ref{s canoncial lift para} conducts a support computation for a canonical lift of an element in a parabolically induced module, utilizing results from Section \ref{s intersect bruhat cells}. Section \ref{s geometric lemma and dual} describes the filtrations arising from the geometric lemma. Section \ref{s match filtrations} establishes our main result on matching filtrations, as stated in Theorem \ref{thm filtrations coincide bc pairing}. Finally, Section \ref{s consequences} explores some implications.




\subsection{Acknowledgements} The author would like to thank Xuhua He and Jianghua Lu for discussions on Richardson varieties. The author would also like to thank Caihua Luo for helpful discussions, and thank Maarten Solleveld for his comments on an earlier version of this article. The author would also like to thank referees for their very helpful comments. This article is benefited from author's visit to NCTS at Taiwan in December 2024, and the author would like to thank the center for the warm hospitality. Some results in this article were announced during 1st Greater Bay Area Conference in Algebra at Shenzhen in December 2025. This project is supported in part by the National Natural Science Foundation of China (Project No. 12322120) and the Research Grants Council of the Hong Kong Special Administrative Region, China (Project No: 	17305223).

\section{Some results on Weyl groups} \label{s weyl groups}

\subsection{Basic setup and notations} \label{ss setup and notations}

Let $R$ be a (not necessarily reduced) root system and let $\Phi$ be a choice of simple roots in $R$. Let $W$ be the Weyl group associated to $R$. For a subset $S \subset \Phi$, let $W(S)$ be the subgroup of $W$ generated by the simple reflections associated to roots in $S$. For $w \in W$, the {\it $S$-part} (resp. {\it left $S$-part}), denoted by $w_S$, of $w$ is the unique minimal length element such that $ww_S^{-1} \in W(S)$ (resp. $w_S^{-1}w \in W(S)$). Let $w_o$ be the longest element in $W$. In particular, we write $w_{o,S}$ for the $S$-part of $w_o$. We let $\theta(\alpha)=-w_o(\alpha)$ for $\alpha \in R$. 

Let $w \in W$ with a reduced expression $s_r\ldots s_1$ for some simple reflections $s_1, \ldots, s_r$. A {\it subexpression} of $w$ is a sequence $\left\{  w_0, \ldots, w_r\right\}$ of elements in $W$ with the conditions:
\begin{itemize}
\item[(i)] $w_0=\mathrm{Id}$;
\item[(ii)] $w_iw_{i-1}^{-1} \in \left\{ \mathrm{Id}, s_i \right\}$ for all $1 \leq i \leq r$.
\end{itemize}
By abuse of notations, we also say $w_r$ is a subexpression of $w$. Let $\leq$ be the Bruhat ordering on $W$ i.e. $w' \leq w$ if $w'$ is a subexpression of $w$.

One basic result is the compatibility of the Bruhat ordering with the $S$-parts of Weyl group elements:

\begin{lemma} \cite[Lemmas 3.5,3.6]{De77} \label{lem bruhat order transitive S}
Let $w, w' \in W$. Let $S \subset \Phi$. If $w \leq w'$, then $w_S \leq w_S'$.
\end{lemma}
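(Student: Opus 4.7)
The plan is to exploit the subword characterization of the Bruhat order together with the fact (implicit in the definition of $w_S$ that makes it unique) that $w_S$ is the unique minimum-length representative of the right coset $W(S)\, w$. As a preliminary observation I would first record that for any $w \in W$ the factorization $w = u \cdot w_S$ with $u \in W(S)$ is length-additive, so concatenating a reduced expression of $u$ with one of $w_S$ yields a reduced expression of $w$. Keeping only the $w_S$-block of this reduced expression exhibits $w_S$ itself as a subexpression of $w$, and more generally this shows $w_S \leq v$ for every $v$ in the coset $W(S)\, w_S$.

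The main argument then runs as follows. Factor $w' = u' \cdot w'_S$ with $u' \in W(S)$, pick reduced expressions $\underline{u'}$ of $u'$ and $\underline{w'_S}$ of $w'_S$, and concatenate them to form a reduced expression $\underline{w'}$ of $w'$ in which positions $1, \ldots, \ell(w'_S)$ correspond to the $w'_S$-block and positions $\ell(w'_S)+1, \ldots, \ell(w')$ correspond to the $u'$-block (in the right-to-left indexing convention of the excerpt). Since $w \leq w'$, the subword property provides a subexpression of $\underline{w'}$ whose total product equals $w$. Splitting this subexpression at the block boundary yields a decomposition $w = v_1 v_2$, where $v_2 \leq w'_S$ is the contribution of the $w'_S$-block and $v_1 \leq u'$ is the contribution of the $u'$-block; in particular $v_1 \in W(S)$. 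Therefore $v_2 = v_1^{-1} w$ lies in $W(S)\, w = W(S)\, w_S$.

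Applying the preliminary observation to $v_2$ gives $w_S \leq v_2$, and combining with $v_2 \leq w'_S$ yields $w_S \leq w'_S$ by transitivity. The only delicate points are the length-additivity of the parabolic factorization $w = u \cdot w_S$ and the clean splitting of a subexpression along the concatenation boundary; both are standard features of Coxeter groups and follow routinely from the subexpression definition recalled in the excerpt, so I do not anticipate a substantial obstacle. If one wished to avoid invoking the parabolic factorization as a black box, an alternative is to induct on $\ell(w') - \ell(w)$ and reduce to the one-step case $w' = w \cdot s$ with $s$ a simple reflection, but the direct subword argument above seems cleaner.
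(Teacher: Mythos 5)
Your argument is correct. Note that the paper does not actually prove this lemma; it is quoted directly from Deodhar \cite[Lemmas 3.5, 3.6]{De77}, so there is no in-paper proof to compare against. What you give is the standard self-contained subword-property proof: the two points you flag as delicate are indeed the only ones that need care, and both hold. The length-additivity $\ell(w)=\ell(u)+\ell(w_S)$ for $u\in W(S)$ and $w_S$ the minimal coset representative is the basic fact about minimal representatives of $W(S)\backslash W$ (and is what makes the paper's definition of the $S$-part well-posed in the first place, since ``the unique element with $ww_S^{-1}\in W(S)$'' only pins down $w_S$ once one adds the minimality requirement — worth stating explicitly). The splitting of a subexpression at the concatenation boundary is immediate from the definition of subexpression used in the paper, since a subexpression is just a choice of which letters of the fixed reduced word to retain, and the retained letters in the $u'$-block all lie in $S$, so $v_1\in W(S)$ and $v_2=v_1^{-1}w\in W(S)w_S$. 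Combining $w_S\le v_2$ (minimality of $w_S$ in its coset, via your preliminary observation) with $v_2\le w'_S$ (subword of a reduced word for $w'_S$) closes the argument. Deodhar's own proof proceeds instead through his axiomatic characterization of the Bruhat order rather than the subword property, but your route is shorter and entirely adequate for the purposes of this paper.
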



\subsection{Order reversing map} \label{ss order reversing map}

For $S, T \subset \Phi$, let $W_{S,T}$ be the set of minimal representatives of the double cosets in $W(S)\setminus W/W(T)$. We also have the following version of Lemma \ref{lem bruhat order transitive S}:

\begin{lemma} \label{lem minimal double cosets}
Let $w, w' \in W$. Let $u, u'$ be the respective minimal representatives in $W(S)wW(T)$ and $W(S)w'W(T)$. If $w \leq w'$, then $u \leq u'$.
\end{lemma}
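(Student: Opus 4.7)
The plan is to reduce to Lemma \ref{lem bruhat order transitive S} by peeling off the double coset one side at a time. For $T \subset \Phi$ and $v \in W$, let ${}^Tv$ denote the minimum-length element of the right coset $vW(T)$. Since the Bruhat order is preserved by inversion and $vW(T) = (W(T)v^{-1})^{-1}$, Lemma \ref{lem bruhat order transitive S} (applied with $T$ in place of $S$, to $v^{-1}$ and $(v')^{-1}$) immediately yields the right-sided analog: if $v \leq v'$, then ${}^Tv \leq {}^Tv'$.

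The crux is the structural identity that the minimum $u$ of $W(S)wW(T)$ equals ${}^T(w_S)$. Granting this, the lemma falls out in two steps: from $w \leq w'$, Lemma \ref{lem bruhat order transitive S} gives $w_S \leq w'_S$, and then the right-sided analog applied to this inequality gives $u = {}^T(w_S) \leq {}^T(w'_S) = u'$.

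To verify the identity, set $u = {}^T(w_S)$ and write $w_S = ub$ with $b \in W(T)$ and $\ell(w_S) = \ell(u) + \ell(b)$. By construction $u$ is minimal in $uW(T)$, so it suffices to show $u$ is also minimal in $W(S)u$; together these two conditions amount to the standard Deodhar criterion for $u$ to be the double-coset minimum. Suppose a simple $s \in S$ satisfied $\ell(su) < \ell(u)$. Then $sub$ would lie in $W(S)ub = W(S)w_S = W(S)w$ with length at most $\ell(su) + \ell(b) < \ell(u) + \ell(b) = \ell(w_S)$, contradicting minimality of $w_S$ in $W(S)w$. The main conceptual step is isolating this structural identity; once stated, both it and the lemma reduce to mechanical length and inversion arguments on top of Lemma \ref{lem bruhat order transitive S}.
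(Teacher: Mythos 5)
Your proof is correct and follows essentially the same route as the paper's: peel off the double coset one side at a time and apply the one-sided order-preservation of Lemma \ref{lem bruhat order transitive S} twice (once directly, once via inversion for the other side). The only differences are cosmetic — you minimize over $W(S)$ first and then over $W(T)$ while the paper does the reverse, and you explicitly verify that the two-step minimization produces the double-coset minimum, a standard fact the paper's proof takes for granted.
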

\begin{proof}
Suppose $w\leq w'$. Let $v, v'$ be the respective minimal representatives in $wW(T)$ and $w'W(T)$. Then, by Lemma \ref{lem bruhat order transitive S}, $v \leq v'$. Note that $u, u'$ are the minimal representatives of the double cosets in $W(S)v$ and $W(S)v'$ respectively. Then, by the left version of Lemma \ref{lem bruhat order transitive S}, we have $u \leq u'$.
\end{proof}

 We shall need the following lemma:

\begin{lemma} \label{lem WT under conjugation}
Let $T \subset \Phi$. Then $w_{o,T}^{-1}W(T)w_{o,T}=W(\theta(T))$.
\end{lemma}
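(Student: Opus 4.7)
The plan is to reduce the conjugation by $w_{0,T}$ to conjugation by the full longest element $w_0$, and then to read off the image from the $\theta$-action on simple roots.

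First, from the defining property $w_0 w_{0,T}^{-1} \in W(T)$ I would write $w_0 = u\, w_{0,T}$ with $u \in W(T)$, so that $w_{0,T}^{-1} = w_0^{-1} u$. Since $u \in W(T)$ normalizes $W(T)$, this gives
\[
w_{0,T}^{-1}\, W(T)\, w_{0,T} \;=\; w_0^{-1} u\, W(T)\, u^{-1} w_0 \;=\; w_0^{-1}\, W(T)\, w_0.
\]
Thus the problem reduces to computing $w_0^{-1} W(T) w_0$, which removes any dependence on the specific coset representative $w_{0,T}$.

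Next, for each simple reflection $s_\alpha$ with $\alpha \in T$ I would apply the standard identity $w^{-1} s_\alpha w = s_{w^{-1}(\alpha)}$. The longest element satisfies $w_0^{-1} = w_0$ by uniqueness, and $w_0(\alpha) = -\theta(\alpha)$ by definition of $\theta$, so
\[
w_0^{-1} s_\alpha w_0 \;=\; s_{w_0(\alpha)} \;=\; s_{-\theta(\alpha)} \;=\; s_{\theta(\alpha)}.
\]
Since $W(T)$ is generated by $\{s_\alpha : \alpha \in T\}$, conjugation by $w_0$ sends it onto the subgroup generated by $\{s_{\theta(\alpha)} : \alpha \in T\}$, which is exactly $W(\theta(T))$ (noting that $\theta$ preserves $\Phi$ since it is the Dynkin involution). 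Combining with the first step yields $w_{0,T}^{-1} W(T) w_{0,T} = W(\theta(T))$.

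There is no serious obstacle here; the entire argument is a routine manipulation of Weyl group elements. The only point to highlight is the first reduction: because the ambiguity in $w_0 = u\, w_{0,T}$ lies inside $W(T)$, conjugating $W(T)$ by $w_{0,T}$ and by $w_0$ produce the same subgroup, so the non-canonical nature of $w_{0,T}$ is irrelevant to the statement.
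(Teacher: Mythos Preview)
Your proof is correct and follows essentially the same approach as the paper: both reduce conjugation by $w_{0,T}$ to conjugation by $w_0$ via the relation $w_0 = u\, w_{0,T}$ with $u \in W(T)$, and then identify $w_0^{-1} W(T) w_0$ with $W(\theta(T))$. The paper is slightly terser (it names $u$ as the longest element of $W(T)$ but does not actually use this) while you spell out the generator computation $w_0^{-1} s_\alpha w_0 = s_{\theta(\alpha)}$, but the argument is the same.
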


\begin{proof}
It is well-known that $-w_o(\Phi)=\Phi$ and so $\theta(T) \subset\Phi$. Let $x$ be the longest element in $W(T)$. Then $xw_{o,T}=w_o$. Then 
\[   w_{o,T}^{-1}W(T)w_{o,T}=w_{o,T}^{-1}x^{-1}W(T)xw_{o,T}=w_o^{-1}W(T) w_o=W(\theta(T)) .
\]
\end{proof}

We now define a map:
\begin{align} \label{eqn bijection reversing order}  \Omega_{S, T}: W_{S, T} \rightarrow W_{S, \theta(T)} 
\end{align}
such that $\Omega_{S, T}(u)$ is the minimal representative in the coset
\[   W(S) u w_{o,T}W(\theta(T)) .
\]
We may simply write $\Omega$ for $\Omega_{S,T}$ if it is clear from the context and there is no ambiguity.

\begin{proposition} \label{prop order reversion} (c.f. \cite[Lemma 3.6]{Ri92})
The map $\Omega$ is an order-reversing bijection.
\end{proposition}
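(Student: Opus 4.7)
The plan is to reduce the map $\Omega$ to the classical Bruhat-order reversal by the longest element $w_0 \in W$, at which point the order-reversing property becomes a direct application of Lemma \ref{lem minimal double cosets}. The key reformulation is to replace $w_{0,T}$ in the defining set of $\Omega(u)$ by the full $w_0$. Writing $w_0 = x \cdot w_{0,T}$ with $x \in W(T)$ (from the definition of the $T$-part) and using that $w_{0,T}$ is a minimal coset representative forces $\ell(x) = \ell(w_0) - \ell(w_{0,T})$; since $w_0$ itself lies in $W(T) w_0$ and is the longest element of $W$, this $x$ must be the longest element of $W(T)$. In particular $W(T) w_{0,T} = W(T) w_0$. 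Combined with the identity $W(T) w_0 = w_0 W(\theta(T))$, which follows from the same conjugation argument as Lemma \ref{lem WT under conjugation} applied with $w_0$ in place of $w_{0,T}$, this gives
\[
W(S) u w_{0,T} W(\theta(T)) = W(S) u W(T) w_{0,T} = W(S) u W(T) w_0 = W(S) u w_0 W(\theta(T)).
\]
So $\Omega(u)$ is the minimal-length representative of the $(S,\theta(T))$-double coset $W(S) u w_0 W(\theta(T))$.

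With this reformulation, bijectivity of $\Omega$ is immediate: right multiplication by $w_0$ is an involution of $W$ and, by $W(T) w_0 = w_0 W(\theta(T))$, induces a bijection between the double cosets $W(S)\setminus W/W(T)$ and $W(S)\setminus W/W(\theta(T))$. Passing to minimal representatives yields the bijection $\Omega \colon W_{S,T} \to W_{S,\theta(T)}$, and the inverse is $\Omega_{S,\theta(T)}$ using $\theta^2 = \mathrm{Id}$.

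For order-reversal, let $u, u' \in W_{S,T}$ with $u \leq u'$. The standard identity $\ell(w w_0) = \ell(w_0) - \ell(w)$ implies that right multiplication by $w_0$ reverses the Bruhat order, so $u' w_0 \leq u w_0$ in $W$. Applying Lemma \ref{lem minimal double cosets} with the parabolic pair $(S, \theta(T))$ in place of $(S,T)$ to the inequality $u' w_0 \leq u w_0$, the corresponding minimal double-coset representatives satisfy
\[
\Omega(u') = \text{min.\ rep.\ of } W(S) u' w_0 W(\theta(T)) \;\leq\; \text{min.\ rep.\ of } W(S) u w_0 W(\theta(T)) = \Omega(u),
\]
as desired. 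The converse implication follows by running the same argument for $\Omega^{-1} = \Omega_{S,\theta(T)}$.

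The single nontrivial step is the initial identity $W(S) u w_{0,T} W(\theta(T)) = W(S) u w_0 W(\theta(T))$, which hinges on the fact that $w_0$ and $w_{0,T}$ differ on the left by the longest element of $W(T)$. Once this is in place, the rest of the proof is a clean composition of Bruhat-order reversal by $w_0$ with the monotonicity in Lemma \ref{lem minimal double cosets}.
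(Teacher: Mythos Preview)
Your proof is correct and follows essentially the same route as the paper: both arguments rewrite the double coset defining $\Omega(u)$ so that the map factors through right multiplication by $w_0$, then invoke the standard Bruhat-order reversal $u \leq u' \Rightarrow u'w_0 \leq uw_0$ and pass to minimal representatives. The only cosmetic difference is that you apply Lemma~\ref{lem minimal double cosets} directly to the double cosets, whereas the paper phrases the last step via the $\theta(T)$-part and Lemma~\ref{lem bruhat order transitive S}.
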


\begin{proof}
We first show it is a bijection. Note that, by Lemma \ref{lem WT under conjugation},
\[  W(S)uW(T)w_{o,T}=W(S)uw_{o,T}W(\theta(T)) .
\]
Thus $\Omega(u)=\Omega(u')$ if and only if $u=u'$, which shows that $\Omega$ is a bijection.

We now show it is order-reversing. Let $u, u' \in W_{S,T}$ with $u \leq u'$. It is standard that: 
\begin{align} \label{eqn bruhat reverse longest}
    uw_o \geq u'w_o  .
\end{align}
Note that the left $\theta(T)$-part of $uw_o$ (resp. $u'w_o$) coincides with that of $uw_{o,T}$ (since $uw_o$ can be obtained from $uw_{o,T}$ by multiplying an element in $W(T)$ on the right). Thus, by Lemma \ref{lem bruhat order transitive S} and (\ref{eqn bruhat reverse longest}), $\Omega(u) \geq \Omega(u')$.
\end{proof}

\section{Intersections of Bruhat cells} \label{s intersect bruhat cells}
\subsection{Notations} \label{ss notations}
We will use the structure of linear algebraic groups as outlined in \cite{Bo91} throughout the remainder of this article. One may see \cite{CGP15} for a modern account on the subject.  Let $\mathbb G$ be a connected reductive group defined over a non-Archimedean local field $F$. Let $G=\mathbb G(F)$ be the group of $F$-rational points of $\mathbb G$. Fix a maximal split torus $T$ in $G$. Let ${}_FR=R(G,T)$ be the relative root system of $G$ with respect to $T$ (\cite[8.17]{Bo91}). Let $Z_G(T)$ be the subgroup of $G$ centralizing $T$ and let $W=N_G(T)/Z_G(T)$ be the relative Weyl group of $G$. By \cite[Theorem 21.6]{Bo91}, ${}_FR$ is a root system with the underlying Weyl group naturally isomorphic to $W$. Let $R_{nd}$ be the subset of non-divisible roots in ${}_FR$. We fix a set $\Phi$ of simple roots in ${}_FR$, which determines the set of positive roots, denoted $R^+$. Let $R^+_{nd}=R^+\cap R_{nd}$. For $S \subset \Phi$, let $R_{nd,S}$ be the root subsystem of $R_{nd} \cap (\mathbb ZS)$.

For each root $\alpha \in {}_FR$, we associate to a root subgroup $\mathbb U_{\alpha}$ defined over $F$ and write $U_{\alpha}=\mathbb U_{\alpha}(F)$ \cite[Proposition 21.9]{Bo91}. For $\alpha \in R_{nd}$, let $U_{(\alpha)}=U_{\alpha}$ if $2\alpha \notin R$ and let $U_{(\alpha)}$ be the subgroup generated by $U_{\alpha}$ and $U_{2\alpha}$ otherwise. The group $U$ is directly spanned by $U_{(\alpha)}$ ($\alpha \in R_{nd}^+$) taken in any order i.e. 
\[  \prod_{\alpha \in R_{nd}^+} U_{(\alpha)} \rightarrow U
\]
is an isomorphism (in any order taken) \cite[Proposition 21.9]{Bo91}. 

Let $B$ be the minimal parabolic $F$-subgroup determined by $T$ and $\Phi$. A {\it standard parabolic subgroup} of $G$ is a subgroup generated by $Z_G(T)$, and $U_{(\alpha)}$ for $\alpha \in R_{nd}^+\cup R_{nd, S}$ for some subset $S \subset \Phi$ (\cite[Section 21.11]{Bo91}). For a standard parabolic subgroup $P$ of $G$, we also write $M_P$ to be the Levi subgroup in $P$, write $U_P$ to be the unipotent subgroup of $P$, and write $U_P^-$ to be the unipotent subgroup of the parabolic subgroup opposite to $P$.

\subsection{Open compact subgroups} \label{ss open compact subgp}

It is well-known that $G$ is an $l$-group i.e. Hausdorff topological group such that the identity has a basis of neighborhoods which are open and compact (see \cite[Ch II, Section 2.1]{Be92}). For an open compact subgroup $K$ of $G$ and $\alpha \in R_{nd}$, let $K_{(\alpha)}=K\cap U_{(\alpha)}$ and $K'=K\cap T$. An open compact subgroup $K$ of $G$ is said to be {\it in a good position} if the natural map 
\[ K'\times \prod_{\alpha \in R_{nd}} K_{(\alpha)} \rightarrow K
\]
is a bijection, for any ordering on $\alpha$'s. 

In the context below, let $K_0$ be a special open compact subgroup in $G$. All chosen open compact subgroups are assumed to be in $K_0$ and are normal in $K_0$. We can always choose such sufficiently small open compact subgroup which is also in a good position (see Bruhat's theorem \cite[Page 41]{Be92}, also see \cite{BD84, BH03}). We shall use this several times. In particular, this also implies one can always find a sufficiently small open compact subgroup $K$ with respect to any parabolic subgroup $P$ satisfying: 
\[   K =(K\cap U_P)\cdot (K \cap M_P) \cdot (K\cap U_P^-) .
\]

\subsection{ Bruhat cells}

We keep using above notations. We have fixed a set of simple roots and we shall also use the notions in Section \ref{ss setup and notations} such as the Bruhat ordering $\leq$. For each $w \in W$, we choose a representative $\dot{w}$ of $w$ in $N_G(T)$. According to \cite[Theorem 21.26]{Bo91}, the Zariski closure of $B\dot{w}B$
\[ \overline{B\dot{w}B} = \bigsqcup_{w' \leq w} B\dot{w}'B ,
\]
which is also closed in $G$ under the topology of an $l$-group.

\subsection{Intersection of Bruhat cells}



The intersection $B\dot{w}B \cap B\dot{w}'B^-$ typically refers to as a Richardson variety when the underlying field is $\mathbb C$. This intersection is significant in the study of algebraic groups and has been explored in depth in works such as \cite{De86}. A refinement of this study is presented in \cite{Ri92}. In this context, we will outline the intersection and introduce some essential combinatorial elements that are critical to understanding its structure.

\begin{definition}
Let $w \in W$ with a reduced expression $s_r\ldots s_1$ for simple reflections $s_1, \ldots, s_r$ in $W$. A subexpression $\left\{ w_0, \ldots, w_{r} \right\}$ of $w$ is said to be {\it distinguished} if $w_0=\mathrm{Id}$, and for each $1 \leq i \leq r$, 
\begin{align} \label{eqn distinguished subexp cond}
w_{i} \leq s_iw_{i-1}.
\end{align}

Note that in the condition of a distinguished subexpression, if the inequality (\ref{eqn distinguished subexp cond}) is strict, then $w_{i}=w_{i-1}$. 
\end{definition}

\begin{theorem}\cite[Remark 2.4, Proposition 5.3]{De86} \label{thm subexpress = distinguished}
The set of distinguished subexpressions of $w$ coincides with the set of subexpressions.
\end{theorem}

\begin{proposition} (c.f. \cite{De86}) \label{prop intersection and bruhat}
Let $w, w' \in W$. Then $B\dot{w}B \cap B\dot{w}'B^- \neq \emptyset$ if and only if $w'\leq w$.
\end{proposition}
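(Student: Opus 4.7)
For brevity write $C^-(v) := B\dot{v}B^-$. The plan is to prove the two implications separately, in both cases leveraging the theorem just cited that every subexpression of a reduced word is distinguished.

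For $w' \leq w \Rightarrow C(w) \cap C^-(w') \neq \emptyset$, I would fix a reduced expression $s_r\cdots s_1$ of $w$ and, using the cited theorem, choose a distinguished subexpression $(e = w_0, w_1,\ldots,w_r = w')$ of $w'$ inside it. I then construct an element $x \in C(w) \cap C^-(w')$ step-by-step: set $x_0 = e$, and for each $i = 1, \ldots, r$, let $x_i = \dot{s}_i x_{i-1}$ when the subexpression ``uses'' $s_i$ (i.e.\ $w_i = s_iw_{i-1}$), and $x_i = \dot{s}_i u_i x_{i-1}$ for a well-chosen $u_i \in U_{(\alpha_i)}$ when it ``skips'' $s_i$ (i.e.\ $w_i = w_{i-1}$). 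The goal is to verify inductively that $x_i \in C(s_i\cdots s_1) \cap C^-(w_i)$; the first membership follows from the product formula $B\dot{s}_iB \cdot B\dot{v}B = B\dot{s}_i\dot{v}B$ when $\ell(s_iv) = \ell(v)+1$, and the second membership in the skipped case is arranged by choosing $u_i$ via a rank-one computation inside $\langle U_{(\alpha_i)}, U_{(-\alpha_i)}\rangle$ that cancels the ``generic'' landing in $C^-(s_iw_{i-1})$ and forces the product into $C^-(w_{i-1})$. The distinguished inequality $w_{i-1} \leq w_{i-1}s_i$ is precisely the condition that makes such a cancellation possible.

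For the converse, I would induct on $\ell(w)$. The base case $\ell(w) = 0$ forces $\dot{w}' \in BB^-$; since the $(B \times B^-)$-orbits $\{B\dot{v}B^-\}_{v \in {}_FW}$ on $G$ are pairwise disjoint with $BB^-$ being the orbit of $e$, this gives $w' = e$. For the inductive step, pick a simple reflection $s$ with $sw < w$ and $x \in C(w) \cap C^-(w')$. Using $P_s \cdot C(w) = C(w) \cup C(sw)$ with $P_s := B \cup B\dot{s}B$ the minimal parabolic, I multiply $x$ on the left by a suitable element of $P_s$ to move it into $C(sw)$; tracking the effect on the $C^-$-side via the same rank-one calculation as above shows the new element lies in $C^-(w')$ or $C^-(sw')$. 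The inductive hypothesis applied to $sw$, combined with the lifting property of the Bruhat order across $s$, then yields $w' \leq w$ in the various case analyses.

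The main obstacle in both directions is the rank-one calculation that controls how left multiplication by $\dot{s}_i$ (or by elements of $P_s$) transforms the opposite Bruhat cell $C^-(v)$: because $B^-$ sits on the right, this is not a direct consequence of the BN-pair axioms and must be carried out inside the rank-one subgroup $\langle U_{(\alpha_i)}, U_{(-\alpha_i)}, T\rangle$. This is the heart of Deodhar's argument and is what motivates the definition of ``distinguished''. The non-split $p$-adic setting introduces no essential new difficulty, since the conjugation formulas $\dot{s}U_{(\alpha)}\dot{s}^{-1} = U_{(s\alpha)}$ and the root-subgroup commutators carry over directly to the groups $U_{(\alpha)}$.
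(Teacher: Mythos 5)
Your proposal is correct and follows essentially the same route as the paper, namely Deodhar's argument: the "if" direction builds an element of the intersection inductively along a distinguished subexpression, inserting a rank-one element $\dot{s}_i u_i$ with $u_i \in U_{(\alpha_i)}$ in the skipped steps exactly as the paper does with $v = x'\dot{s}_{q+1}x \in U_{(-\alpha_{q+1})}$, and the "only if" direction tracks the $(B,B^-)$-double coset under left multiplication by simple reflections via the containment $\dot{s}B\dot{z}B^- \subset B\dot{s}\dot{z}B^- \cup B\dot{z}B^-$. The only cosmetic difference is that you peel the reduced word of $w$ from the top by induction on $\ell(w)$, whereas the paper builds the subexpression up from the identity.
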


\begin{proof}
Since \cite[Theorem 1.1]{De86} is stated for groups over $\mathbb C$, we shall give some details following his proof in our case. We make some simplifications and hence we do not have (and do not need) a concrete description on the intersection. 

We first prove the only if direction. Suppose $B\dot{w}B \cap B\dot{w}'B^-\neq \emptyset$. Write the reduced expression for $w$ as $s_r \ldots s_1$. Then we have:
\[  \dot{s}_r\ldots \dot{s}_1u \in B\dot{w}'B^- .
\]
for some $u \in U$. Let $z_i' \in W$ be the unique element such that $\dot{s}_i\ldots \dot{s}_1u \in B\dot{z}_i'B^-$. Now, one begins with $u$ and inductively applies $\dot{s}_i$ on the left. Then 
\[   \dot{s}_i\ldots \dot{s}_1u \in \dot{s}_iB\dot{z}_{i-1}'B^- .
\]
By using $\dot{s}_iB\dot{z}'_{i-1}B^- \subset B\dot{s}_i\dot{z}_{i-1}'B^- \cup B\dot{z}_{i-1}'B^-$, we have that $z_i' =s_iz_{i-1}'$ or $z_{i-1}'$. Note that $u \in B\dot{z}_0'B^-$ and so $z_0'=\mathrm{Id}$. In other words, by the definition of the Bruhat ordering, $z_r'$ is a subexpression of $w$. This proves the only if direction.

We now consider the if direction. Let $z_0=\mathrm{Id}, z_1, \ldots, z_r$ be a sequence of elements in $W$ representing a subexpression $w'$ of $w$. By Theorem \ref{thm subexpress = distinguished}, we shall assume $z_1,\ldots, z_r$ form a distinguished subexpression. We now construct inductively elements in the intersection:
\[  B\dot{s}_{q}\ldots \dot{s}_1B \cap B\dot{z}_{q}B^- .
\]
When $q=r$, we shall have the desired element in the intersection. 

When $q=0$, it is clear that $B \cap BB^-\neq \emptyset$ and we can pick a particular element in the intersection. Suppose we have constructed an element in
\begin{align} \label{eqn intersect bruhat cells}
  B\dot{s}_{q}\ldots \dot{s}_1B \cap B\dot{z}_{q}B^- .
\end{align}
We now consider 
\[  B\dot{s}_{q+1}\ldots \dot{s}_1B \cap B\dot{z}_{q+1}B^- 
\]
and consider the following cases:
\begin{itemize}
\item Case 1: $z_{q+1} \neq z_q$ i.e. $z_{q+1}=s_{q+1}z_q$. By multiplying an element in $B$ on the left in (\ref{eqn intersect bruhat cells}), we may assume the intersection $ B\dot{s}_{q}\ldots \dot{s}_1B \cap B\dot{z}_{q}B^- $ contains an element in $B\dot{z}_qB^-$ of the form:
\[    \dot{z}_qb' 
\]
for some $b' \in B^-$. Then $\dot{s}_{q+1}\dot{z}_qb' \in B\dot{z}_{q+1}B^-$ and is also in $\dot{s}_{q+1}B\dot{s}_q\ldots \dot{s}_1B \subset B\dot{s}_{q+1}\dot{s}_q\ldots \dot{s}_1B$ (\cite[Proposition 21.22]{Bo91}), as desired.
\item Case 2: $z_{q+1}=z_q$. Let $\alpha_{q+1}$ be the simple root associated to the simple reflection $s_{q+1}$. Let $v \in U_{(-\alpha_{q+1})}=\dot{s}_{q+1}U_{(\alpha_{q+1})}\dot{s}_{q+1}^{-1} $ be a non-identity element. Note that  $v \in \dot{s}_{q+1}B\dot{s}_{q+1}^{-1}\subset B\cup B\dot{s}_{q+1}B$ by \cite[Theorem 21.15]{Bo91}. Since $v \notin B$, $v$ can be written in the form 
\[   x' \dot{s}_{q+1}x 
\]
for some $x \in U_{\alpha_{q+1}}$ and $x' \in B$, and so $v\dot{z}_qb'$ is of the form:
\[    x'\dot{s}_{q+1}x \dot{z}_q b',
\]
where $b' \in B^-$. Then, we have:
\[   \dot{s}_{q+1} x \dot{z}_q b'  \in \dot{s}_{q+1}B\dot{s}_q\ldots \dot{s}_1B \subset B\dot{s}_{q+1}\ldots \dot{s}_1B.
\]
Hence, $v\dot{z}_qb' \in B\dot{s}_{q+1}\dots \dot{s}_1B$. 

On the other hand, the distinguishedness above implies that $z_q^{-1}(\alpha_{q+1}) >0$, and so $\dot{z}_q^{-1}v\dot{z}_q \in U_{\alpha}^- \subset B^-$. Hence, the element $v \dot{z}_qb'$ is also in $B\dot{z}_{q+1}B^-=B\dot{z}_qB^-$. This proves this case.
\end{itemize}
\end{proof}


\subsection{Consequences on other intersections} \label{ss consequences on intersections}

Let $P, Q$ be standard parabolic subgroups of $G$. Let $S$ and $T$ be the respective subsets of $\Phi$ determining $P$ and $Q$. We use the notion $W_{S,T}$ as the one in Section \ref{ss order reversing map}. Then 
\[ G= \bigsqcup_{w \in W_{S,T}} P\dot{w}Q .
\]
Indeed, we can also use $W_{S,T}$ to parametrize the double cosets of $P\setminus G/Q^-$:

\begin{lemma} \label{lem decompose by opposite}
We use the notations above. Let $w, w' \in W_{S,T}$. Then 
\[  P\dot{w}Q^-=P\dot{w}'Q^-   \quad \Longleftrightarrow \quad w =w'  .
\]
Moreover, 
\[   G=\bigsqcup_{w \in W_{S,T}} P\dot{w}Q^- .
\]
\end{lemma}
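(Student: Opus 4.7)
The plan is to reduce to the analogous decomposition for the standard parabolic $Q_{\theta(T)}$ via the identification $\dot{w}_0 Q_{\theta(T)} \dot{w}_0^{-1} = Q^-$. For the Levi part, $w_0(\theta(T)) = -T$ (since $w_0^2 = \mathrm{Id}$), and the Levi of a standard parabolic depends only on the underlying root subsystem, so $\mathrm{Ad}(\dot{w}_0)$ carries $M_{Q_{\theta(T)}}$ onto $M_Q$. For the unipotent part, $w_0(R^+) = R^-$ and $w_0(R(\theta(T))) = R(T)$, so $w_0$ sends $R^+ \setminus R^+(\theta(T))$ bijectively onto $-(R^+ \setminus R^+(T))$, which shows $\mathrm{Ad}(\dot{w}_0)$ carries $U_{Q_{\theta(T)}}$ onto $U_Q^-$. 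Multiplying the desired equality on the right by $\dot{w}_0$ and using $Q^- \dot{w}_0 = \dot{w}_0 Q_{\theta(T)}$, it suffices to prove
\[
G = \bigsqcup_{w \in W_{S,T}} P \dot{w} \dot{w}_0 Q_{\theta(T)}.
\]

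I would compare this with the known decomposition $G = \bigsqcup_{u \in W_{S,\theta(T)}} P \dot{u} Q_{\theta(T)}$ and establish the matching $P \dot{w} \dot{w}_0 Q_{\theta(T)} = P \dot{u} Q_{\theta(T)}$ for $u = \Omega_{S,T}(w)$. By definition of the $T$-part, $w_0 = z \, w_{0,T}$ for some $z \in W(T)$, and Lemma~\ref{lem WT under conjugation} gives $w_{0,T} W(\theta(T)) = W(T) w_{0,T}$; combining these yields the coset identity
\[
W(S) \, w w_0 \, W(\theta(T)) \;=\; W(S) \, w W(T) w_{0,T} \;=\; W(S) \, w w_{0,T} \, W(\theta(T)),
\]
whose minimal representative is $\Omega_{S,T}(w)$ by definition. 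This Weyl-group identity transfers to the group level because representatives of $W(S)$ can be chosen inside $N_G(T) \cap M_P \subset P$, representatives of $W(\theta(T))$ inside $N_G(T) \cap M_{Q_{\theta(T)}} \subset Q_{\theta(T)}$, and different lifts of the same Weyl element differ by an element of $T \subset P \cap Q_{\theta(T)}$.

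Since $\Omega_{S,T} : W_{S,T} \to W_{S,\theta(T)}$ is a bijection by Proposition~\ref{prop order reversion}, the map $w \mapsto \Omega_{S,T}(w)$ transports both the covering and the disjointness of the standard decomposition by $Q_{\theta(T)}$ onto the sought decomposition by $Q^-$. In particular, if $P \dot{w}_1 Q^- = P \dot{w}_2 Q^-$ with $w_1, w_2 \in W_{S,T}$, then right-multiplying by $\dot{w}_0$ gives $P \dot{u}_1 Q_{\theta(T)} = P \dot{u}_2 Q_{\theta(T)}$ with $u_i = \Omega_{S,T}(w_i)$, forcing $u_1 = u_2$ and hence $w_1 = w_2$. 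The main technical step is the initial identification $\dot{w}_0 Q_{\theta(T)} \dot{w}_0^{-1} = Q^-$; after that, the argument is essentially Weyl-group bookkeeping built on the order-reversing bijection $\Omega_{S,T}$.
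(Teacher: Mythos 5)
Your proof is correct and follows essentially the same route as the paper: both reduce to the standard Bruhat decomposition for the standard parabolic determined by $\theta(T)$ and then invoke the bijectivity of $\Omega_{S,T}$ from Proposition~\ref{prop order reversion}. The only (cosmetic) difference is that you conjugate by $\dot{w}_0$ while the paper conjugates by $\dot{w}_{0,T}$, which costs you the extra (correctly handled) observation that $W(S)\,ww_0\,W(\theta(T)) = W(S)\,ww_{0,T}\,W(\theta(T))$.
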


\begin{proof}
Let $\bar{Q}=\dot{w}_{o,T}^{-1}(Q^-)\dot{w}_{o,T}$. Since $w_{o,T}^{-1}$ sends all negative roots outside the root subsystem generated by $T$ to positive roots, $\bar{Q}$ is still a standard parabolic subgroup of $G$. Moreover, the subset of $\Phi$ determining $\bar{Q}$ is $\theta(T)$. Then, we have:
\[  P\dot{w}Q^-= P\dot{w}\dot{w}_{o,T}\bar{Q}\dot{w}_{o,T}^{-1}=P\dot{v}\bar{Q}\dot{w}_{o,T}^{-1} ,
\]
where $v=\Omega_{S,T}(w)$ ($\Omega_{S,T}$ is defined in (\ref{eqn bijection reversing order})). Then the two assertions follow from Proposition \ref{prop order reversion}. 
\end{proof}

\begin{lemma} \label{lem bruhat decompose for parabolic}
 Let $w \in W_{S,T}$. Then 
\[  P\dot{w}Q = \bigsqcup_{x\in W(S), y \in W(T)}B\dot{x}\dot{w}\dot{y}B ,\quad   P\dot{w}Q^- =\bigsqcup_{x \in W(S), y \in W(T)} B\dot{x}\dot{w}\dot{y}B^-.
\]
\end{lemma}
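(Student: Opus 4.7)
The plan is to handle both decompositions by tracking which Bruhat cells sit inside the given double coset. For the first equality, I would start from the Bruhat decompositions $P=\bigsqcup_{x\in W(S)}B\dot xB$ and $Q=\bigsqcup_{y\in W(T)}B\dot yB$ of the standard parabolics $P$ and $Q$. Since $w\in W_{S,T}$ is a minimal double coset representative, $\ell(xw)=\ell(x)+\ell(w)$ and $\ell(wy)=\ell(w)+\ell(y)$ for every $x\in W(S)$ and $y\in W(T)$, so the standard absorption rule $B\dot aB\cdot B\dot bB=B\dot{ab}B$ (valid when the lengths add) shows $P\dot wQ=\bigcup_{x,y}B\dot x\dot w\dot yB$. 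Alternatively, since $P\dot wQ$ is $(B,B)$-biinvariant, the set $\{v\in W:\dot v\in P\dot wQ\}$ is a union of $(W(S),W(T))$-double cosets containing $w$, hence equals $W(S)wW(T)=\{xwy:x\in W(S),y\in W(T)\}$, which directly produces the claimed union.

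For the second equality, the inclusion $\supseteq$ is immediate: $\dot x\in M_P\subseteq P$, $\dot y\in M_Q\subseteq Q^-$, $B\subseteq P$, and $B^-\subseteq Q^-$ (since $Q^-$ contains $T$ together with every negative root group, split between $U_{M_Q}^-$ and $U_Q^-$), so $B\dot x\dot w\dot yB^-\subseteq P\dot wQ^-$. For the reverse inclusion, I would observe that $P\dot wQ^-$ is $(B,B^-)$-stable and invoke the Birkhoff-type decomposition $G=\bigsqcup_{v\in W}B\dot vB^-$ (which follows from ordinary Bruhat by right translation by $\dot{w_0}$) to present $P\dot wQ^-$ as a union of $(B,B^-)$-cells. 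The task then reduces to identifying the indexing subset $V=\{v\in W:\dot v\in P\dot wQ^-\}\subseteq W$.

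The hardest step, and where $u$ and $\bar Q$ enter, is showing $V=W(S)wW(T)$. The plan is to use the intermediate identity $P\dot wQ^-=P\dot u\bar Q\dot{w_{0,T}}^{-1}$ derived in the proof of Lemma \ref{lem decompose by opposite}, with $u=\Omega_{S,T}(w)$ and $\bar Q$ the standard parabolic with root subset $\theta(T)$. Since $\bar Q$ is itself a standard parabolic, the first decomposition applies to $P\dot u\bar Q$ and gives that $\dot v\in P\dot wQ^-$ is equivalent to $vw_{0,T}\in W(S)uW(\theta(T))$. Combining this with the defining property $u\in W(S)ww_{0,T}W(\theta(T))$ of $\Omega_{S,T}$ from (\ref{eqn bijection reversing order}) and the conjugation identity $w_{0,T}W(\theta(T))w_{0,T}^{-1}=W(T)$ from Lemma \ref{lem WT under conjugation} collapses the condition to $v\in W(S)wW(T)$. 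Rewriting $v=xwy$ then yields the stated decomposition $P\dot wQ^-=\bigcup_{x,y}B\dot x\dot w\dot yB^-$, modulo the harmless point that the indexing by $(x,y)\in W(S)\times W(T)$ may be redundant when $W(S)\cap wW(T)w^{-1}$ is nontrivial.
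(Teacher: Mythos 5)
Your proposal is correct, and the overall strategy coincides with the paper's: both proofs reduce the $Q^-$ case to the ordinary Bruhat decomposition by conjugating $Q^-$ into the standard parabolic $\bar{Q}$ attached to $\theta(T)$. The mechanism differs in one respect worth noting. The paper works entirely with products of $(B,B)$-cells: it expands $P\dot{w}\dot{w}_0\bar{Q}\dot{w}_0^{-1}$ as $\bigcup_{x,y}B\dot{x}B\dot{w}\dot{w}_0B\dot{y}B\dot{w}_0^{-1}$ and collapses the products step by step using $B\dot{v}B\dot{s}B\subset B\dot{v}\dot{s}B\cup B\dot{v}B$ together with the fact that subexpressions of elements of $W(\theta(T))$ stay in $W(\theta(T))$, before conjugating back to turn $B$-cells into $(B,B^-)$-cells. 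You instead invoke $(B,B^-)$-biinvariance of $P\dot{w}Q^-$ and the Birkhoff decomposition $G=\bigsqcup_v B\dot{v}B^-$, so that everything reduces to identifying the indexing set $V=W(S)wW(T)$ via the first part applied to $P\dot{u}\bar{Q}$ and Lemma \ref{lem WT under conjugation}; this cleanly sidesteps the cell-product bookkeeping. One caution on your first equality: the length-additivity argument as stated does not close, because $\ell(xwy)=\ell(x)+\ell(w)+\ell(y)$ can fail for $w\in W_{S,T}$ (e.g.\ $x=y=s$, $w=\mathrm{Id}$ with $s\in W(S)\cap W(T)$), so $B\dot{x}\dot{w}B\dot{y}B$ need not be a single cell. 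Your alternative biinvariance argument --- the set $\{v:\dot{v}\in P\dot{w}Q\}$ is a union of $(W(S),W(T))$-double cosets and hence equals $W(S)wW(T)$ --- is the one that actually works, and it is essentially the same repair the paper makes with its subexpression remark; you should treat it as the main argument rather than a fallback. Your observation that the indexing by $(x,y)$ is redundant (so the displayed $\bigsqcup$ is really a union of distinct cells) applies equally to the paper's statement.
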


\begin{proof}
We will only prove the second equation, as the first is simpler. Let $\bar{Q}=\dot{w}_oQ^-\dot{w}_o^{-1}$. We remark that $\bar{Q}$ coincides with the one defined in the proof of Lemma \ref{lem decompose by opposite} and so is still a standard parabolic subgroup as discussed in the proof of that lemma. The second assertion follows from:
\begin{align*}
 P\dot{w}Q^- &= P\dot{w}\dot{w}_o\bar{Q}\dot{w}_o^{-1} \\
             &= \bigsqcup_{x\in W(S), y \in W(\theta(T))}B\dot{x}B\dot{w}\dot{w_o}B\dot{y}B\dot{w}_o^{-1} \\
						 &= \bigsqcup_{x\in W(S), y \in W(\theta(T))}B\dot{x}B\dot{w}\dot{w_o}\dot{y}B\dot{w}_o^{-1} \\
						 &= \bigsqcup_{x \in W(S), y \in W(\theta(T))} B\dot{x}\dot{w}\dot{w}_o\dot{y}B\dot{w}_o^{-1} \\
						 &= \bigsqcup_{x \in W(S), y \in W(\theta(T))} B\dot{x}\dot{w}\dot{w}_o\dot{y}\dot{w}_o^{-1}B^- \\
						 &= \bigsqcup_{x \in W(S), z \in W(T)} B\dot{x}\dot{w}\dot{z}B^-,
\end{align*}
where the third equality follows from that for $v \in W$ and a simple reflection $s$ in $W$,
\[  B\dot{v}\dot{s}B\subset   B\dot{v}B\dot{s}B \subset B\dot{v}\dot{s}B \cup B\dot{v} B
\]
and any subexpression of an element of $W(\theta(T))$ is still in $W(\theta(T))$, and the fourth equality follows from $\dot{w}_oB^-\dot{w}_o^{-1}=B$, and the fifth equality follows from $w_oW(\theta(T))w_o^{-1}=W(T)$.
\end{proof}

\begin{corollary} \label{cor parabolic intersection}
We use the notations above. Let $w, w' \in W_{S,T}$. Then,  
\[  P\dot{w}Q\cap P \dot{w}'Q^- \neq \emptyset ,
\]
if and only if $w' \leq w$.
\end{corollary}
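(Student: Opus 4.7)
The plan is to reduce everything to the statement about single Bruhat cells in Proposition \ref{prop intersection and bruhat}, using the parabolic decompositions from Lemma \ref{lem bruhat decompose for parabolic} and the minimal-representative compatibility in Lemma \ref{lem minimal double cosets}.

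First, applying Lemma \ref{lem bruhat decompose for parabolic} to both $P\dot{w}Q$ and $P\dot{w}'Q^-$, one has
\[
P\dot{w}Q \cap P\dot{w}'Q^- = \bigsqcup_{\substack{x,x'\in W(S)\\ y,y'\in W(T)}} \bigl( B\dot{x}\dot{w}\dot{y}B \cap B\dot{x}'\dot{w}'\dot{y}'B^- \bigr).
\]
Hence the left hand side is nonempty if and only if there exist $x,x'\in W(S)$ and $y,y'\in W(T)$ such that $B\dot{x}\dot{w}\dot{y}B \cap B\dot{x}'\dot{w}'\dot{y}'B^-$ is nonempty. By Proposition \ref{prop intersection and bruhat}, this is equivalent to the existence of such $x,x',y,y'$ with $x'w'y' \leq xwy$.

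For the ``if'' direction, assume $w'\leq w$. Taking $x=x'=\mathrm{Id}$ and $y=y'=\mathrm{Id}$, Proposition \ref{prop intersection and bruhat} yields $B\dot{w}B \cap B\dot{w}'B^-\neq \emptyset$, which lies inside $P\dot{w}Q\cap P\dot{w}'Q^-$.

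For the ``only if'' direction, suppose $x'w'y'\leq xwy$ for some $x,x'\in W(S)$, $y,y'\in W(T)$. Since $w,w'\in W_{S,T}$ are the minimal representatives of the double cosets $W(S)wW(T)$ and $W(S)w'W(T)$ respectively, Lemma \ref{lem minimal double cosets} applied to $v:=x'w'y'$ and $v':=xwy$ gives that the minimal representatives of $W(S)vW(T)=W(S)w'W(T)$ and $W(S)v'W(T)=W(S)wW(T)$ satisfy $w' \leq w$, as required.

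There is no substantial obstacle here; the only point that needs a little care is identifying $w$ and $w'$ as the minimal representatives of their respective double cosets (which is exactly the definition of $W_{S,T}$) so that Lemma \ref{lem minimal double cosets} can be invoked cleanly.
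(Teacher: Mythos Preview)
Your proof is correct and follows essentially the same approach as the paper's own proof: both directions reduce to Proposition \ref{prop intersection and bruhat} via the decompositions in Lemma \ref{lem bruhat decompose for parabolic}, and the ``only if'' direction is finished off with Lemma \ref{lem minimal double cosets}. The only difference is cosmetic --- you spell out the decomposition of the intersection explicitly at the start, whereas the paper invokes it implicitly.
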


\begin{proof}
We first prove the if direction. Suppose $w \leq w'$. Then $B\dot{w}B \cap B\dot{w}'B^-\neq \emptyset$ by Proposition \ref{prop intersection and bruhat}
 and so $P\dot{w}Q \cap P\dot{w}'Q^-\neq \emptyset$.

We now prove the only if direction. Suppose $P\dot{w}Q \cap P\dot{w}'Q^-\neq \emptyset$. Then, by  $B\dot{x}\dot{w}\dot{y}B \cap B\dot{x}'\dot{w}'\dot{y}'B^-\neq \emptyset$ for some $x,x' \in W(S)$ and $y,y' \in W(T)$. Hence, $x'w'y' \leq xwy$ by Proposition \ref{prop intersection and bruhat}. By Lemma \ref{lem minimal double cosets}, we have $w' \leq w$.
\end{proof}

\subsection{Closure of Bruhat cells}

\begin{lemma} \label{lem algebraic closure of bwb}
We use the notations in the beginning of Section \ref{ss consequences on intersections}. Let $w \in W_{S, T}$. Then
\[    \overline{P\dot{w}Q} = \bigsqcup_{v\in W_{S,T}, v\leq w} P\dot{v}Q .
\]
\end{lemma}

\begin{proof}
\cite[Theorem 21.26]{Bo91} is primarily stated for $P$ and $Q$ to be the minimal standard parabolic subgroup. The general form is a direct consequence of Lemmas \ref{lem minimal double cosets} and \ref{lem bruhat decompose for parabolic}, and the case of a minimal standard parabolic subgroup in \cite[Theorem 21.26]{Bo91}.
\end{proof}

\begin{lemma} \label{lem algebraic closure of bwb-}
We again use the notations in the beginning of Section \ref{ss consequences on intersections}. Let $w \in W_{S, T}$. Then
\[   \overline{P\dot{w}Q^-} =\bigsqcup_{v \in W_{S,T}, w\leq v} P\dot{v}Q^- .
\]
\end{lemma}

\begin{proof}
We define $\bar{Q}$ as in the proof of the previous two lemmas. Note that
\begin{align*}
 P\dot{w}Q^- &= P\dot{w}\dot{w}_{o,T}\bar{Q}\dot{w}_{o,T}^{-1} .
\end{align*}
Now,
\begin{align*}
  \overline{P\dot{w}Q^-} & = \overline{P\dot{w}\dot{w}_{o,T}\bar{Q}}\dot{w}_{o,T}^{-1} \\
	                       & = \bigsqcup_{v' \in W_{S, \theta(T)}: v' \leq \Omega_{S,T}(w)} P\dot{v}'\bar{Q} \dot{w}_{o,T}^{-1} \\
												 & = \bigsqcup_{v \in W_{S, T}: \Omega_{S,T}(v) \leq \Omega_{S,T}(w)} P\dot{v} Q^- \\
												 & = \bigsqcup_{v \in W_{S, T}: w\geq v } P\dot{v} Q^- ,
\end{align*}
where the first equality follows from that a group action is continuous; the second equality follows from Lemma \ref{lem algebraic closure of bwb}; the third equality follows from the definition of $\Omega_{S,T}$ and Corollary \ref{cor parabolic intersection}; the forth equality follows from Proposition \ref{prop order reversion}.
\end{proof}

\section{Bernstein-Casselman canonical pairing} \label{s bc canonical pairing}

\subsection{Stable operator}

For a vector space $V$ with an operator $\omega \in \mathrm{End}(V)$, we say that $\omega$ is {\it stable} if $\omega$ is invertible on $\mathrm{im}~\omega$ and 
\[ V=\mathrm{ker}~\omega \oplus \mathrm{im}~\omega.  \] 

\begin{proposition} \label{prop stable for dual}
Let $\pi$ be a smooth representation of $G$ and let $\omega \in \mathrm{End}(\pi)$ be stable. Let $\pi^{\vee}$ be the smooth dual of $\pi$. Define $\omega^{\vee}$ to be the operator on $\pi^{\vee}$ given by $(\omega^{\vee}.f)(v)=f(\omega.v)$, where $f \in \pi^{\vee}$ and $v \in \pi$. Then $\omega^{\vee}$ is also stable.
\end{proposition}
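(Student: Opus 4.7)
The plan is to dualize the stability direct sum $V = \ker\omega \oplus \mathrm{im}\,\omega$ and track $\omega^{\vee}$ summand by summand. Write $V_0 = \ker\omega$ and $V_1 = \mathrm{im}\,\omega$; both are $G$-stable smooth subrepresentations (since $\omega$ is implicitly $G$-equivariant, which is already needed for $\omega^{\vee}$ to preserve $V^{\vee}$). The first step is to verify that the splitting passes to the smooth dual: for every open compact subgroup $K$, $V^K = V_0^K \oplus V_1^K$, and since smooth linear functionals are exactly those fixed by some such $K$, this yields a canonical decomposition
\[
V^{\vee} = V_0^{\perp} \oplus V_1^{\perp},
\]
where $V_i^{\perp} \subseteq V^{\vee}$ denotes the smooth functionals vanishing on $V_i$, together with the natural identifications $V_0^{\perp} \cong V_1^{\vee}$ and $V_1^{\perp} \cong V_0^{\vee}$.

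Next I would compute $\omega^{\vee}$ on each summand. For $f \in V_1^{\perp}$ and any $v \in V$, $(\omega^{\vee}f)(v) = f(\omega v) = 0$ because $\omega v \in V_1$, so $V_1^{\perp} \subseteq \ker \omega^{\vee}$. For $f \in V_0^{\perp}$ and $v = v_0 + v_1 \in V_0 \oplus V_1$, $(\omega^{\vee}f)(v) = f(\omega v_1)$, which again vanishes on $V_0$; hence $\omega^{\vee}$ preserves $V_0^{\perp}$, and under the identification $V_0^{\perp} \cong V_1^{\vee}$ it becomes the smooth dual of $\omega|_{V_1}$. Since $\omega|_{V_1}$ is invertible by the stability hypothesis, so is its smooth dual, giving that $\omega^{\vee}$ is invertible on $V_0^{\perp}$.

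Combining the two computations forces $\ker \omega^{\vee} = V_1^{\perp}$ and $\mathrm{im}\,\omega^{\vee} = V_0^{\perp}$, which together with the decomposition above yields $V^{\vee} = \ker\omega^{\vee} \oplus \mathrm{im}\,\omega^{\vee}$ and the invertibility of $\omega^{\vee}$ on its image, i.e. the stability of $\omega^{\vee}$. The one step that deserves attention is the passage from the decomposition of $V$ to that of $V^{\vee}$ in the smooth setting, and I expect this to be the only mild technical point; however it reduces at once to the exactness of $K$-invariants on the $G$-equivariant direct sum $V_0 \oplus V_1$, so no genuine obstacle appears.
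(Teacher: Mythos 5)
Your argument is correct and follows essentially the same route as the paper: both dualize the decomposition $V=\ker\omega\oplus\mathrm{im}\,\omega$ by identifying $\mathrm{im}\,\omega^{\vee}$ with the annihilator of $\ker\omega$ (and $\ker\omega^{\vee}$ with the annihilator of $\mathrm{im}\,\omega$), using the invertibility of $\omega$ on its image to produce preimages; your version is in fact slightly more complete, since the paper's written proof only carries out the identification of the image. The one caveat is your parenthetical that $\omega$ is $G$-equivariant: in the intended application $\omega=\pi_K(a^i)$ is a Hecke operator commuting only with a compact open subgroup $K$, which is all that is needed (and all that is true) for the projections onto $\ker\omega$ and $\mathrm{im}\,\omega$ to preserve smoothness of functionals and hence for your decomposition $V^{\vee}=V_0^{\perp}\oplus V_1^{\perp}$ to hold in the smooth dual.
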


\begin{proof}
Let $X$ be the subspace of functions in $\pi^{\vee}$ vanishing on $\mathrm{ker}~\omega$. We claim that $\mathrm{im}~\omega^{\vee}=X$. 

Let $f \in \mathrm{im}~\omega^{\vee}$. Then $f=\omega^{\vee}.\widetilde{f}$ for some $\widetilde{f} \in \pi^{\vee}$. For $x \in \mathrm{ker}(\omega)$, $f(x)=(\omega^{\vee}.\widetilde{f})(x)=f(\omega.x)=0$. Hence, $f \in X$. This proves $\mathrm{im}~\omega^{\vee} \subset X$. 

Let $f \in X$. For each $x \in \mathrm{im}~\omega$, we can write $x= \omega^{-1}.\widetilde{x}$ for some $\widetilde{x} \in \mathrm{im}~\omega$. Then, define $\widetilde{f}(x)=f(\widetilde{x})$ if $x \in \mathrm{im}~\omega$ and $\widetilde{f}(x)=0$ if $x \in \mathrm{ker}~\omega$. This then implies that $(\omega^{\vee}.\widetilde{f})(x)=f(x)$ for all $x$. Since $f$ is smooth, $\widetilde{f}$ is also smooth and in $\pi^{\vee}$. This shows that $X \subset \mathrm{im}~\omega^{\vee}$. 

Let $Y$ be the subspace of functions in $\pi^{\vee}$ vanishing on $\mathrm{im}~\omega$. Then one applies a similar above argument to show that $Y=\mathrm{ker}~\omega^{\vee}$.
\end{proof}

\subsection{Jacquet functor} \label{s jacquet functor}

Let $\pi$ be a smooth representation of $G$. Let $P$ be a standard parabolic subgroup of $G$. Define the Jacquet module of $\pi$ with respect to $P$ to be:
\[  \pi_{U_P} := \delta^{-1/2} \cdot \frac{\pi}{\langle u.x-x : x \in \pi, u \in U_P \rangle} ,
\]
where $\delta$ is the modular character of $P$. For the unipotent radical $U_P^-$ in the parabolic subgroup $P^-$ opposite to $P$, we can define similarly for $\pi_{U_P^-}$ by replacing $U_P$ with $U_P^-$ in the definition and using the modular character of $P^-$. There is a natural $M_P$-action on $\pi_{U_P}$ and $\pi_{U_P^-}$.

\subsection{Stabilization Theorem}

For a smooth representation $\pi$ of $G$ and an open compact subgroup $K$ of $G$, we define
\[  \pi_K(g).v = \int_{G} \mathrm{ch}_{KgK}(h) \cdot \pi(h)v ~dh =\int_{KgK}\pi(h)v~dh, 
\]
where $\mathrm{ch}_{KgK}$ is the characteristic function of $KgK$, and $dh$ is the normalized Haar measure on $G$ with $\int_{K_0} dh=1$. 

Let $P$ be a standard parabolic subgroup of $G$. Let $Z(M_P)$ be the center of $M_P$. An element $a \in Z(M_P)$ is said to be {\it strictly dominant} for $M_P$ if for any two open compact subgroups $H_1, H_2$ in $U_P$, there exists an integer $n \geq 0$ such that $a^nH_1a^{-n} \subset H_2$; and furthermore, for any two open compact subgroups $H_1', H_2'$ in $U_P^-$, there exists an integer $n'\geq 0$ such that $a^{-n'}H_1'a^{n'} \subset H_2'$. The existence of such element is shown in \cite[Lemma 6.14]{BK98} and allows one to find an 'inverse' of the projection $\pi^K \rightarrow \pi_U^{K \cap M_P}$ as follows.

\begin{theorem}[Jacquet's lemma, Bernstein's stablization theorem] \cite{BD84, Be92}\label{thm jacq lem stab}
Let $\pi$ be a smooth representation of $G$. Let $P$ be a standard parabolic subgroup of $G$, let $U=U_P$. Let $K$ be an open compact subgroup of $G$ in a good position. Let $K_M=M_P\cap K$. Let $a$ be a strictly dominant element for $M_P$. Then,
\begin{enumerate}
\item \cite[Proposition IV.6.1]{Re10} There exists an integer $i_0\geq 0$ such that for all $i \geq i_0$, $\pi_K(a^i)$ is stable. Moreover, the natural projection $\mathrm{pr}$ from $\pi^K$ to $\pi_U^{K_M}$ induces a bijection from $\mathrm{im}~\pi_K(a^i)$ to $\pi_U^{K_M}$ for all $i \geq i_0$. 
\item \cite[Th\'eor\'eme IV.6.1]{Re10} The natural projection $\mathrm{pr}: \pi^K \rightarrow \pi_U^{K_M}$ has a right inverse, as a linear map.
\item \cite[Pages 261, 262]{Re10} For $i \geq i_0$, the projection $\mathrm{pr}: \pi^K \rightarrow \pi_U^{K_M}$ sends $\mathrm{ker}~\pi_K(a^i)$ to zero.
\end{enumerate}
\end{theorem}

\noindent
{\it Sketch of some proofs.}
Here we provide some explanations on some parts of the theorem for the convenience of the readers. We assume the stabilization part: there exists an integer $i_0 \geq 0$ such that for all $i \geq i_0$, $\pi_K(a^i)$ is stable. For the remaining part of (1), for $x \in \pi_U^{K_M}$, a classical result of Jacquet (usually referred to as Jacquet's lemma) allows one to find a large $i_x$ such that $a^{i_x}.x$ is in the image $\mathrm{im}~\mathrm{pr}$. The stabilization theorem says that one can find one $i^*$ such that $a^{i*}$ maps from $\pi_U^{K_M}$ to $\mathrm{im}~\mathrm{pr}$. Since $a^{i^*}$ is invertible on $\pi_U^{K_M}$, we then have that  $\mathrm{im}~\mathrm{pr}$ is equal to the entire $\pi_U^{K_M}$.

Note that (2) follows from (1). We now consider (3). Suppose $x \in \mathrm{ker} \pi_K(a^i)$. This implies that $\int_{a^{-i}K_Ua^i} u.x~ du=0$. Another classical result of Jacquet then implies that $x$ is in the kernel of the projection. This shows (3). \qed

\begin{corollary} \label{cor canonical lift}
We use the notations in Theorem \ref{thm jacq lem stab}. Let $U^-=U_P^-$. Then the followings also hold:
\begin{enumerate}
\item With the same $i_0$ in Theorem \ref{thm jacq lem stab}, for $i \geq i_0$, $\pi^{\vee}_K(a^{-i})$ is stable. The natural projection from $(\pi^{\vee})^K$ to $(\pi^{\vee})_{U^-}^{K_M}$ induces a bijection from $\mathrm{im}~\pi^{\vee}_K(a^{-i})$ to $(\pi^{\vee})_{U^-}^{K_M}$ for all $i \geq i_0$. 
\item The natural projection $\mathrm{pr}: (\pi^{\vee})^K \rightarrow (\pi^{\vee})_{U^-}^{K_M}$ has a right inverse, as a linear map.
\item For $i \geq i_0$, the natural projection $\mathrm{pr}: (\pi^{\vee})^K \rightarrow (\pi^{\vee})_{U^-}^{K_M}$ sends $\mathrm{ker}~\pi^{\vee}_K(a^{-i})$ to zero.
\end{enumerate}
\end{corollary}

\begin{proof}
The stability part of (1) follows from Proposition \ref{prop stable for dual}. Then the remaining part follows from the stability part and classical treatments of Jacquet (see the sketch of the proof for Theorem \ref{thm jacq lem stab} above). 
\end{proof}

\begin{definition}
Let $\pi$ be a smooth representation of $G$. Let $P$ be a standard parabolic subgroup of $G$. Let $x \in \pi_{U_P}$ (resp. $y \in \pi_{U_P^-}$). We shall say that an element $\widetilde{x}$ (resp. $\widetilde{y}$) in $\pi$ is a {\it lift} of $x$ (resp. $y$) if the image of $\widetilde{x}$ (resp. $\widetilde{y}$) under the natural projection map $\pi \rightarrow \pi_{U_P}$ (resp. $\pi \rightarrow \pi_{U_P^-}$) is $x$ (resp. $y$). 
\end{definition}

We now define the notion of canonical lifts with respect to a fixed choice of an open compact subgroup $K$.

\begin{definition} \cite{Ca93, Be92}
Let $\pi$ be a smooth representation of $G$. Let $P$ be a standard parabolic subgroup of $G$. Let $K$ be an open compact subgroup of $G$. Let $K_M=K \cap M_P$. Let $a$ be a strictly dominant element for $M_P$. Let $i$ be a sufficiently large integer such that $\pi_K(a^i)$ and so $\pi^{\vee}_K(a^{-i})$ are stable.
\begin{enumerate}
\item Let $x \in \pi_{U_P}^{K_M}$.  The unique element $\widetilde{x} \in \mathrm{im}~\pi_K(a^i)$ mapped to $x$ in Theorem \ref{thm jacq lem stab}(1) is called the {\it canonical lift} of $x$ (with respect to $K$). 
\item Let $y \in (\pi^{\vee})_{U_P^-}^{K_M}$. The unique element $\widetilde{y} \in \mathrm{im}~\pi^{\vee}_K(a^{-i})$ mapped to $y$ in Corollary \ref{cor canonical lift}(1) is called the {\it canonical lift} of $y$ (with respect to $K$).  
\end{enumerate}
\end{definition}

\begin{example}
Let $G$ be a split reductive $p$-adic group and let $I$ be the Iwahori subgroup of $G$. Let $B$ be the Borel subgroup of $G$, let $T$ be the maximal torus and let $U$ be the unipotent radical of $B$. Let $\pi$ be a smooth representation in an Iwahori component of $G$. It is a classical result that the projection $\mathrm{pr}$ from $\pi$ to $\pi_U$ induces an isomorphism from $\pi^I$ to $\pi_U^{I\cap T}$. (For some details, see e.g. \cite[Theorem 6.1]{BM93} for a description of parabolically induced modules in terms of Hecke algebra actions and then apply an adjointness to obtain a description of the Jacquet functor.) In other words, any element in $\pi^I$ is the canonical lift of its image under $\mathrm{pr}$.

\end{example}

\subsection{Bernstein-Casselman canonical pairing}

We now state the canonical pairing, which is due to Casselman \cite{Ca93} for admissible representations, following results of Jacquet. The stabilization theorem of Bernstein above removes the admissibility condition. 

\begin{theorem}[Bernstein, Casselman] (see \cite[Proposition VI 9.6]{Re10}) \label{thm bernstein-casselman}
Let $\pi$ be a smooth representation of $G$. Let $P$ be a standard parabolic subgroup of $G$. Let $M=M_P$, let $U=U_P$ and let $U^-=U_P^-$. Then the followings hold:
\begin{itemize}
\item[(1)] There is a functorial isomorphism $(\pi_U)^{\vee} \cong (\pi^{\vee})_{U^-}$, as $M$-representations. 
\item[(2)] Moreover, the isomorphism in (1) is explicitly given as follows. For $x \in \pi_U$ and $f \in (\pi^{\vee})_{U^-}$, let $K$ be an open compact subgroup of $G$ in a good position such that  $x \in \pi_U^{K \cap M}$ and $f \in (\pi^{\vee})_{U^-}^{K\cap M}$. Let $\widetilde{x} \in \pi^K$ and $\widetilde{f} \in (\pi^{\vee})^K$ be the canonical lifts of $x $ and $f $ (with respect to $K$). Then the isomorphism is given by the pairing $\langle x, f \rangle_M:=\widetilde{f}(\widetilde{x})=\langle \widetilde{x}, \widetilde{f}\rangle$, which is independent of the choice of $K$. 
\end{itemize}
\end{theorem}


\section{Canonical lifts of some parabolically induced modules} \label{s canoncial lift para}

\subsection{Parabolic induction}

Let $P$ be a standard parabolic subgroup of $G$. Let $\sigma$ be a smooth representation of $M_P$, inflated to a $P$-representation. Define
\[  \mathrm{Ind}_P^G\sigma := \left\{ f: G \rightarrow \sigma | f \mbox{ is smooth},\ f(pg)=\delta(p)^{1/2} p.f(g) \mbox{ for $p \in P$} \right\} ,
\]
where $\delta$ is the modular character of $P$. We shall regard $\mathrm{Ind}_P^G\sigma$ as a $G$-representation via the right translation i.e. for $f \in \mathrm{Ind}_P^G\sigma$,
\[  (g.f)(h)=f(hg)
\]
for any $g, h \in G$.

For $f \in \mathrm{Ind}_P^G\sigma$, define
\[  \mathrm{supp}(f)=\left\{  x \in G: f(x) \neq 0 \right\} ,
\]
called the {\it support} of $f$.

\subsection{Support of a function under the action of a strictly dominant element}

We will begin by presenting two lemmas about group structure, continuing from Section \ref{s intersect bruhat cells}.

\begin{lemma} \label{lem subset on Cartan decomposition}
Let $K$ be an open compact subgroup of $G$ in a good position and let $Q$ be a standard parabolic subgroup of $G$. Let $M=M_Q$, $U=U_Q$ and $U^-=U_Q^-$. Let $a$ be a strictly dominant element for $M$. Then, for $i \geq 0$, $Ka^iK \subset QU^-$.
\end{lemma}

\begin{proof}
Since $K$ is in a good position, we have the Iwahori decomposition of $K$:
\[   K= K_U \cdot K_M \cdot K_{U^-} ,
\]
where $K_*=*\cap K$ for $*=M, U, U^-$. 

Let $g \in Ka^iK$. Write $g=k_1a^ik_2$ for $k_1, k_2 \in K$. By the Iwahori decomposition, write $k_2=umu'$ for $u \in K_U$, $m \in K_M$ and $u' \in K_{U^-}$. Then 
\[  g=k_1a^ik_2=k_1a^i(umu')=k_1(a^iua^{-i})a^imu' \in K  Q^-.
\]
which follows from $a^iua^{-i}\in K$ (as $a$ is straightly dominant and $i \geq 0$) and $a^imu' \in Q^-$. 

On the other hand, by the Iwahori decomposition again, $K \subset K_U\cdot K_M \cdot K_{U^-}\subset Q  Q^-$. Thus, we also have $g \in Q Q^-=QU^-$. This shows the first inclusion.

\end{proof}

\begin{lemma} \label{lem support under a action}
We use the notations in Lemma \ref{lem subset on Cartan decomposition}. Let $P$ be another standard parabolic subgroup of $G$. Let $S$ and $T$ be the subsets of $\Phi$ determining $P$ and $Q$ respectively. Let $u, w \in W_{S,T}$. Let $g \in G$ and let $m \in M$. Suppose $gka^{-i} \in P\dot{w}mK$ for some $i \geq 0$, for some $k \in K$ and for some $m \in M_Q$. If $u \not\leq w$, then 
\[  PgK \cap P\dot{u}Q^- =\emptyset ,
\]
and in particular, $g \notin P\dot{u}Q^-$.
\end{lemma}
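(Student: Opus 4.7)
The plan is to argue by contrapositive: I will assume $PgK \cap P\dot{u}Q^- \neq \emptyset$ and deduce $u \leq w$, contradicting the hypothesis $u \not\leq w$.

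First, I would translate the hypothesis into a set containment. From $gka^{-i} \in P\dot{w}mK$, write $gka^{-i} = p\dot{w}mk'$ for some $p \in P$ and $k' \in K$. Solving for $g$ gives $g = p\dot{w}mk'a^ik^{-1}$, and since $k^{-1}K = K$, we obtain
\[
PgK \subseteq P\dot{w}mKa^iK.
\]
Because $m \in M \subseteq Q$, the containment $Ka^iK \subseteq QU^-$ from Lemma \ref{lem subset on Cartan decomposition} upgrades to $mKa^iK \subseteq MQU^- = QU^-$, so
\[
PgK \subseteq P\dot{w}QU^-.
\]

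Second, I would reduce to Corollary \ref{cor parabolic intersection} by stripping off the extra $U^-$-factor. By the previous step and the non-empty-intersection assumption, $P\dot{w}QU^- \cap P\dot{u}Q^- \neq \emptyset$. Pick an element in the intersection and write it in two ways as $p_1\dot{w}qv = p_2\dot{u}q^-$ with $p_1,p_2 \in P$, $q \in Q$, $v \in U^-$, $q^- \in Q^-$. Rearranging,
\[
p_1\dot{w}q = p_2\dot{u}(q^-v^{-1}),
\]
and $q^-v^{-1} \in Q^-$ because $v \in U^- \subseteq Q^-$ and $Q^-$ is a subgroup. Hence $p_1\dot{w}q$ lies in $P\dot{w}Q \cap P\dot{u}Q^-$, so the latter intersection is non-empty. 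By Corollary \ref{cor parabolic intersection}, this forces $u \leq w$, the desired contradiction.

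I do not anticipate a serious obstacle: both of the key inputs --- the Cartan-type containment $Ka^iK \subseteq QU^-$ from Lemma \ref{lem subset on Cartan decomposition}, and the Bruhat-order criterion for non-empty intersection of opposing double cosets from Corollary \ref{cor parabolic intersection} --- are already in hand. The only subtle point is the observation in the second step that $U^- \subseteq Q^-$ permits absorbing the stray $U^-$-factor, converting what looks like an intersection with $P\dot{w}QU^-$ into an intersection with $P\dot{w}Q$ on which Corollary \ref{cor parabolic intersection} directly applies.
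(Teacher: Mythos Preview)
Your proposal is correct and follows essentially the same route as the paper's proof: both derive $PgK \subset P\dot{w}mKa^iK \subset P\dot{w}QU_Q^-$ from Lemma~\ref{lem subset on Cartan decomposition}, then absorb the stray $U_Q^-$-factor into $Q^-$ to reduce to $P\dot{w}Q \cap P\dot{u}Q^- \neq \emptyset$, and conclude via Corollary~\ref{cor parabolic intersection}. The paper states the absorption step as the equivalence $P\dot{w}QU_Q^- \cap P\dot{u}Q^- \neq \emptyset \Leftrightarrow P\dot{w}Q \cap P\dot{u}Q^- \neq \emptyset$ without writing out the element-chase you give, but the content is identical.
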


\begin{proof}
The hypothesis gives that $g \in P\dot{w}mKa^{i}K$. Thus 
\[ PgK \subset  P\dot{w}mKa^iK \]
Now, by Lemma \ref{lem subset on Cartan decomposition}, we then have:
\[ PgK  \subset P\dot{w}QU_Q^- .\]

On the other hand, since $U_Q^- \subset Q^-$, we also have:
\[  P\dot{w}QU_Q^- \cap P\dot{u}Q^- \neq \emptyset \quad \Longleftrightarrow \quad P\dot{w}Q\cap P\dot{u}Q^- \neq \emptyset . \]
Now the lemma follows from Corollary \ref{cor parabolic intersection}.
\end{proof}

As seen in Theorem \ref{thm bernstein-casselman}, computing the canonical pairing may involve computing the action of $\pi_K(a^i)$ or $\pi_K(a^{-i})$ for some strictly dominant $a$ for some Levi subgroup of $G$. We shall need the following related proposition later.

\begin{proposition} \label{lem support after acting by ak}
Let $K$ be an open compact subgroup of $G$ in a good position. Let $P, Q$ be standard parabolic subgroups of $G$. Let $S, T$ be the subsets of $\Phi$ determining $P$ and $Q$ respectively. Let $\sigma$ be a smooth representation of $M_P$ and let $\pi=\mathrm{Ind}_P^G\sigma$. Let $w \in W_{S,T}$. Let
\[ \mathcal O^-_w= \bigsqcup_{w' \in W_{S,T}: w' \leq w}  P\dot{w}'Q^-. \]
 Let $f$ be a function in $\mathrm{Ind}_P^G\sigma$ such that $\mathrm{supp}(f)\subset P\dot{w}mK$ for some $m \in M_Q$. For any strictly dominant element $a$ for $M_Q$ and for $i \geq 0$, $\pi_K(a^{-i})f$ has support in $\mathcal O_w^-$.
\end{proposition}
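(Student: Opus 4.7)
The strategy is to reduce the claim to Lemma \ref{lem support under a action} via a straightforward support computation for the convolution operator $\pi_K(a^{-k})$. First I would unwind the definition: since the $G$-action on $\mathrm{Ind}_P^G\sigma$ is by right translation, one has
\[
(\pi_K(a^{-k})f)(g) \;=\; \int_{Ka^{-k}K} f(gh)\,dh,
\]
so a point $g$ lies in the support of $\pi_K(a^{-k})f$ only if $gh \in \mathrm{supp}(f)$ for some $h \in Ka^{-k}K$. Because $\mathrm{supp}(f) \subset P\dot{w}mK$ and $K\cdot K = K$, this yields the containment
\[
\mathrm{supp}(\pi_K(a^{-k})f) \;\subset\; P\dot{w}mK\cdot (Ka^{-k}K)^{-1} \;=\; P\dot{w}m K a^{k} K.
\]

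Next, I would fix an arbitrary $g$ in this support and write $g = p\dot{w}m\kappa_1 a^{k}\kappa_2$ with $p \in P$ and $\kappa_1,\kappa_2 \in K$. Then
\[
g\kappa_2^{-1}a^{-k} \;=\; p\dot{w}m\kappa_1 \;\in\; P\dot{w}mK,
\]
which is precisely the hypothesis of Lemma \ref{lem support under a action} (with the integer $i$ there being the present $k$, and with $\kappa_2^{-1} \in K$ playing the role of its $k$). By Lemma \ref{lem decompose by opposite}, the element $g$ lies in a unique double coset $P\dot{u}Q^-$ with $u \in W_{S,T}$. If $u \not\leq w$, then Lemma \ref{lem support under a action} asserts $PgK \cap P\dot{u}Q^- = \emptyset$, contradicting the fact that $g$ belongs to both sides. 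Hence $u \leq w$, and so $g \in P\dot{u}Q^- \subset \mathcal{O}_w^-$, proving the proposition.

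There is no serious obstacle here: the argument is essentially bookkeeping that stitches together Lemmas \ref{lem subset on Cartan decomposition}, \ref{lem decompose by opposite}, and \ref{lem support under a action} together with Corollary \ref{cor parabolic intersection}, all of which have already been established. The only point that demands care is tracking the directions of $a$: the operator involves $a^{-k}$, but the inversion $(Ka^{-k}K)^{-1} = Ka^{k}K$ restores $a^{k}$ with $a$ strictly dominant for $M_Q$, which is exactly the sign convention under which Lemmas \ref{lem subset on Cartan decomposition} and \ref{lem support under a action} were proved.
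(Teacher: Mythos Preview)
Your proposal is correct and follows essentially the same approach as the paper: both arguments unwind the convolution integral to obtain $g\kappa a^{-k}\in P\dot w mK$ for some $\kappa\in K$, and then invoke Lemma~\ref{lem support under a action} (together with the decomposition of Lemma~\ref{lem decompose by opposite}) to conclude that the double coset $P\dot uQ^-$ containing $g$ must satisfy $u\leq w$. Your write-up is in fact a bit more explicit than the paper's in spelling out the contradiction via $g\in PgK\cap P\dot uQ^-$, but the underlying logic is identical.
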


\begin{proof}
Let $\widetilde{f}=\pi_K(a^{-i})f$. Let $g \in G$ such that $\widetilde{f}(g)\neq 0$. By definition,
\[ \widetilde{f}(g)= \int_{Ka^{-i}K} f(gh)~dh 
\]
and so $gka^{-i}  \in P\dot{w}mK$ for some $k \in K$. By Lemma \ref{lem support under a action}, $g \in P\dot{w}'Q^-$ for some $w' \leq w$. This implies the proposition. 
\end{proof}

\section{Geometric lemma and its dual} \label{s geometric lemma and dual}

\subsection{Geometric lemma} \label{ss geometric lemma}

We use the notations as in Section \ref{ss notations}. Let $P$ and $Q$ be standard parabolic subgroups of $G$. Let $S$ and $T$ be the subsets of $\Phi$ determining $P$ and $Q$ respectively. Then, according to \cite[Section 21.16]{Bo91}, there is a one-to-one correspondence between
\[   W(S) \setminus W/W(T) \longleftrightarrow  P\setminus G/Q .
\]
Recall that $W_{S,T}$ is the set of minimal representatives in $W(S) \setminus W/W(T)$. Let $r=|W_{S,T}|$. We enumerate the elements $u_1, \ldots, u_r$ in $W_{S,T}$ such that $i<j$ implies $l
(u_i)\leq l(u_j)$. This, in particular, gives that $u_i\leq u_j$ implies $i \leq j$. 

Let 
\[ \mathcal O_i=\bigsqcup_{i'\geq i} P\dot{u}_{i'}Q. \]
 Then, by \cite[Theorem 21.26]{Bo91} (also see Lemma \ref{lem algebraic closure of bwb}), $\mathcal O_i$ is open in $G$ in the Zariski topology and so is open in $G$ as an $l$-group. Let $\mathcal I_i:=\mathcal I_i(Q, \sigma)$ be the subspace of $\mathrm{Ind}_P^{G}\sigma$ with functions supported in $\mathcal O_i$. By \cite[Proposition 1.8]{BZ76}, we have:
\[   \mathcal I_i/\mathcal I_{i-1} \cong C_c^{\infty}(P\dot{u}_iQ, \sigma) ,
\]
where $C_c^{\infty}(P\dot{u}_iQ, \sigma)$ is the space of smooth $\sigma$-valued functions on $P\dot{u}_iQ$ satisfying $f(pg)=p.f(g)$ with the support to be compactly supported modulo $P$.

Let $\mathcal J_i:=\mathcal J_i(Q, \sigma)$ be the image of the projection from $\mathcal I_i$ to $(\mathrm{Ind}_P^{G}\sigma)_{U_Q}$. This then induces a filtration on $(\mathrm{Ind}_P^{G}\sigma)_{U_Q}$:
\[    0\subseteq \mathcal J_r  \subseteq \ldots \subseteq \mathcal J_1=(\mathrm{Ind}_P^{G}\sigma)_{U_Q} .
\]

As shown in \cite{BZ77}, there is an isomorphism between
\begin{align} \label{eqn isomoprhism geometric lemma}
  C^{\infty}_c(P\dot{w}Q, \sigma)_{U_Q} \cong \mathrm{Ind}^{M_Q}_{P^w\cap M_Q}(\sigma_{P\cap \dot{w}U_Q\dot{w}^{-1}})^w , 
\end{align}
where $P^w=\dot{w}^{-1}P\dot{w}$ and $(\sigma_{P\cap \dot{w}U_Q\dot{w}^{-1}})^w$ is a twist sending from the $P\cap \dot{w}M_Q\dot{w}^{-1}$-representation $\sigma_{P\cap \dot{w}U_Q\dot{w}^{-1}}$ to a $P^w\cap M_Q$-representation, given by:
\begin{align} \label{eqn isomorphism gl integral}
   f \mapsto \left(m \mapsto \delta(m)^{\frac{1}{2}}\int_{U^w} \mathrm{pr}(f(\dot{w}um))~ du \right),
\end{align}
where 
\begin{itemize}
\item $m \in M_Q$, and 
\item $\mathrm{pr}$ is the projection from $\sigma$ to $\sigma_{P\cap \dot{w}U_Q\dot{w}^{-1}}$, and 
\item $U^w=U_Q \cap \dot{w}^{-1}(U^-)\dot{w}$, and 
\item $\delta$ is the modular character of $Q$.
\end{itemize}
We remark that the above integral is well-defined as $f$ has a compact support. The modular character is multiplied to agree with the normalization factor for the Jacquet functor.


\subsection{Dual filtration I: smooth duals} \label{ss dual filtration 1}

We keep using notations in Section \ref{ss geometric lemma}. This then induces a filtration on $((\mathrm{Ind}_P^G\sigma)_{U_Q})^{\vee}$. More precisely, let $\mathcal J^{\vee}_i:=\mathcal J^{\vee}_i(Q,\sigma)$ be the subspace of $((\mathrm{Ind}_P^G\sigma)_{U_Q})^{\vee}$ precisely containing all functions from $(\mathrm{Ind}_P^G\sigma)_{U_Q}$ to $\mathbb C$ which vanish on $\mathcal J_{i+1}(Q, \sigma)$. Then we have a filtration on $((\mathrm{Ind}_P^G\sigma)_{U_Q})^{\vee}$ given by:
\[    0\subseteq \mathcal J_1^{\vee} \subseteq \ldots \subseteq \mathcal J_r^{\vee} =((\mathrm{Ind}_P^G\sigma)_{U_Q})^{\vee} .
\]

\subsection{Dual filtration II: Bernstein-Casselman pairings} \label{ss dual geometric lemma} \label{ss dual filtration 2}

We keep using notations in Section \ref{ss geometric lemma}.  Let $Q^-$ be the opposite parabolic subgroup of $Q$ and let $\bar{Q}=\dot{w}_{o,T}^{-1}(Q^-)\dot{w}_{o,T}$, which is a standard parabolic subgroup of $G$. We now construct a filtration for $(\mathrm{Ind}_P^G\sigma^{\vee})_{U_Q^-}$, as a $M_Q$-representation. 

To do so, by Lemma \ref{lem decompose by opposite},
\[  G = \bigsqcup_{u \in W_{S, T}} P\dot{u}Q^- =\bigsqcup_{u \in W_{S, T}}P\dot{u}\dot{w}_{o,T}\bar{Q}\dot{w}_{o,T}^{-1} . \]
We keep using the enumeration $u_1, \ldots, u_r$ of $W_{S, T}$ in Section \ref{ss geometric lemma}.


Let 
\[  \mathcal O_i^- =  \bigsqcup_{i'\leq i} P\dot{u}_{i'}Q^- =\bigsqcup_{i'\leq i} P\dot{v}_{i'}\bar{Q}\dot{w}_{o,T}^{-1} ,
\]
where $v_{i'}=\Omega_{S,T}(u_{i'})$.
Now, by Lemma \ref{lem algebraic closure of bwb-} (using Proposition \ref{prop order reversion}), $\mathcal O_i^-$ is open in $G$. 

We now define a filtration for $\mathrm{Ind}_P^G(\sigma^{\vee})$. Let $\mathcal I_i^-:=\mathcal I_i^-(Q^-, \sigma^{\vee})$ be the space of functions in $\mathrm{Ind}_P^G(\sigma^{\vee})$ whose support lies in $\mathcal O_i^-$. Then we have a filtration on $\mathrm{Ind}_P^G(\sigma^{\vee})$ given by:
\[ 0 \subset \mathcal I^-_1 \subset \ldots \subset \mathcal I^-_r=\mathrm{Ind}_P^G(\sigma^{\vee}) .
\]

Let $\mathrm{pr}$ be the projection from $\mathrm{Ind}_P^G(\sigma^{\vee})$ to $(\mathrm{Ind}_P^G\sigma^{\vee})_{U_{\bar{Q}}^-}$. Let $\mathcal J_i^-:=\mathcal J_i( Q^-, \sigma^{\vee})$ be the image of $\mathrm{pr}$ on $\mathcal I_i^-(Q^-,\sigma^{\vee})$. This gives a filtration for $(\mathrm{Ind}_P^G\sigma^{\vee})_{U_{\bar{Q}}^-}$:
\[   0 \subseteq \mathcal J_1^- \subseteq \ldots \subseteq \mathcal J_r^-=(\mathrm{Ind}_P^G\sigma^{\vee})_{U_{\bar{Q}}^-}.
\]

\section{Matching filtrations} \label{s match filtrations}







\subsection{Pairing for induced representations} \label{ss pairing induced}

Let $P$ be a standard parabolic subgroup of $G$. Let $\sigma$ be a smooth representation of $M_P$. Define a pairing $\langle , \rangle_{\sigma}: \sigma \times \sigma^{\vee} \rightarrow \mathbb C$ given by:
\[  \langle x, f\rangle_{\sigma}=f(x)
\]
for $x \in \sigma$ and $f \in \sigma^{\vee}$.

There is a natural isomorphism \cite[Proposition 2.25]{BZ76}:
\[ (\mathrm{Ind}_P^G\sigma)^{\vee} \cong \mathrm{Ind}_P^G(\sigma^{\vee})
\]
determined by the non-degenerate pairing on $\mathrm{Ind}_P^G\sigma \times \mathrm{Ind}_P^G(\sigma^{\vee})$ given by: for $f \in \mathrm{Ind}_P^G\sigma$ and $f' \in \mathrm{Ind}_P^G(\sigma^{\vee})$, define
\begin{align} \label{eqn pairing induced module}
  \langle f, f' \rangle = \int_{P\setminus G}  \langle f(g), f'(g) \rangle_{\sigma}~dg,
\end{align}
where $dg$ is a Haar measure on $P\setminus G$ in the sense of \cite[Section 1.20]{BZ76}.

Let $Q$ be a standard parabolic subgroup of $G$. Via Theorem \ref{thm bernstein-casselman}(1), we have:
\[  ((\mathrm{Ind}_P^G\sigma)_{U_Q})^{\vee} \cong ((\mathrm{Ind}_P^G\sigma)^{\vee})_{U_Q^-} .
\]
Combining the above two isomorphisms, we have:
\[  ((\mathrm{Ind}_P^G\sigma)_{U_Q})^{\vee} \cong (\mathrm{Ind}_P^G(\sigma^{\vee}))_{U_Q^-} 
\]
Hence, we have a non-degenerate $M_Q$-equivariant pairing $\langle , \rangle_{M_Q}: (\mathrm{Ind}_P^G\sigma)_{U_Q} \times (\mathrm{Ind}_P^G\sigma^{\vee})_{U_Q^-}$ given by: for $f \in (\mathrm{Ind}_P^G\sigma)_{U_Q}$ and $f' \in  (\mathrm{Ind}_P^G\sigma^{\vee})_{U_Q^-}$,
\begin{align} \label{eqn canonical pairing}
  \langle f, f' \rangle_{M_Q} = \langle \widetilde{f}, \widetilde{f}' \rangle ,
\end{align}
where $\widetilde{f}$ is the canonical lift of $f$ and $\widetilde{f}'$ is the canonical lift of $f'$ with respect to an open compact subgroup $K$ of $G$ such that $f$ and $f'$ are $K\cap M_Q$-invariant.

The goal of this section is to show that the filtrations in Sections \ref{ss dual filtration 1} and \ref{ss dual filtration 2} coincide.

\subsection{Canonical lifts under a filtration}
We continue to use notations in the previous section. Let $\lambda=\mathrm{Ind}_P^G(\sigma^{\vee})$. We briefly outline what we are going to do below. For $f \in \lambda_{U_Q^-}$, one suitably finds an open compact subgroup $K$ of $G$ such that $f$ is $K\cap M_Q$-invariant, finds a strictly dominant element $a \in Z(M_Q)$ and finds a sufficiently large integer $k$ to apply the stabilization theorem. Now one constructs a lift of the element $\lambda_{U_Q^-}(a^k)f$, and then one uses that to find the canonical lift of $f$ (see Lemma \ref{lem commutation rel} below), and uses Proposition \ref{lem support after acting by ak} to show $f$ satisfies some desired properties.


Let $\mathrm{pr}$ be the natural projection from $\mathrm{Ind}_P^G(\sigma^{\vee})$ to $(\mathrm{Ind}_P^G(\sigma^{\vee}))_{U_Q^-}$. Now one defines the filtrations $\mathcal I_i^-=\mathcal I_i^-(Q^-, \sigma^{\vee})$ and $\mathcal J_i^-=\mathcal J_i^-(Q^-, \sigma^{\vee})$ as in Section \ref{s geometric lemma and dual}.

\begin{lemma} \label{lem lift of geometric filt}
Let $K$ be an open compact subgroup of $G$ in a good position. Let $K_M=M_Q\cap K$. Let $f \in (\mathcal J_i^-)^{K \cap M_Q}$ (for some $i$). There exists $\widetilde{f}^* \in (\mathcal I_i^-)^K=\mathcal I^-_i\cap \lambda^K$ such that
\begin{enumerate}
\item $\mathrm{pr}(\widetilde{f}^*)$ and $f$ have the same image when projecting to the quotient $\mathcal J_i^-/\mathcal J_{i-1}^-$; and 
\item $\mathrm{supp}(\widetilde{f}^*) \subset  P\dot{w}_{i}M_Q K$.
\end{enumerate}
\end{lemma}
\begin{proof}

Let $U'{}^-=P\cap \dot{w}_iU_Q^-\dot{w}_i^{-1}$ and let $P'= \dot{w}_i^{-1}P\dot{w}_i \cap M_Q$. We first remark that we have an analogue description for (\ref{eqn isomoprhism geometric lemma}) and 
\[  C^{\infty}_c(P\dot{w}_iQ^-, \sigma)_{U_Q^-} \rightarrow \mathrm{Ind}^{M_Q}_{P'} (\sigma_{U'{}^-})^w 
\]
given by 
\begin{align} \label{eqn jacquet integral}
  f \mapsto \left(m \mapsto \delta(m)^{-\frac{1}{2}}\int_{U^{-,w_i}} f(\dot{w}_ium )~du \right) ,
\end{align}
where $\delta$ is the modular character of $Q$. 

 We now choose finitely many $m_1, \ldots, m_r$ in $M_K$ such that $P\dot{w}_im_1K, \ldots, P\dot{w}_im_rK$ are disjoint and their union covers $P\dot{w}M_Q$. This can be done since $P\setminus (P\dot{w}M_Q) \cong (P^w\cap M_Q)\setminus M_Q$ is compact and $K_M\subset K$.

Let $n_j=\dot{w}_im_j$. By Jacquet's lemma, the natural projection 
\[\mathrm{prr}_j: \sigma^{ M_P\cap n_iKn_i^{-1} } \rightarrow (\sigma_{U'{}^-})^{M_P \cap n_iK_Mn_i^{-1}} .  \]
Now, for $j=1, \ldots, r$, we define
\[  v_j = f(\dot{w}_im_j) .\]
Then we find $\widetilde{v}_j \in  \sigma^{ M_P\cap n_iKn_i^{-1} }$ such that $\mathrm{prr}_j(\widetilde{v}_j)=v_j$.

One defines $\widetilde{h}^*: G \rightarrow \sigma$ such that for all $j$, $p \in P$ and $k \in K$,
\[   \widetilde{f}^*(p\dot{w}_im_jk) =\delta(m_j)^{-\frac{1}{2}} p.v_j
\]
and $\widetilde{f}^*(g)=0$ if $g \notin P\dot{w}M_QK$. Note that the function $\widetilde{f}^*$ is well-defined from our construction, and is smooth. Now construction shows that $\widetilde{f}^*$ satisfies the property (2). Now one checks from (\ref{eqn jacquet integral}) that $\widetilde{f}^*$ satisfies the property (1). 
\end{proof}

\begin{corollary} \label{cor k invariant lift}
Let $f \in (\mathcal J_i^-)^{K_M}$ (for some $i$). There exists $\widetilde{f} \in (\mathcal I_i^-)^K=\mathcal I^-_i\cap \lambda^K$ such that 
\begin{enumerate}
\item $\mathrm{pr}(\widetilde{f})=f$;
\item $\mathrm{supp}(\widetilde{f}) \subset \bigsqcup_{i'\leq i} P\dot{w}_{i'}M_Q K$.
\end{enumerate}

\end{corollary}

\begin{proof}
We shall prove by an induction on $i$. When $i=0$, we consider $\mathcal J_0^-=0$ and $\mathcal I_0^-=0$ and so there is nothing to prove. We now consider $i \geq 1$. By Lemma \ref{lem lift of geometric filt}, there exists $\widetilde{f}^*$ such that $f-\mathrm{pr}(\widetilde{f}^*) \in \mathcal J_{i-1}^-$ and $\mathrm{supp}(\widetilde{f}^*)\subset P\dot{w}_{i}M_QK$. By induction, we have an element $\widetilde{f}' \in (\mathcal I_{i-1}^-)^K$ such that $\mathrm{pr}(\widetilde{f}')=f-\mathrm{pr}(\widetilde{f}^*)$, and $\mathrm{supp}(\widetilde{f}')\subset \bigsqcup_{i'\leq i-1}P\dot{w}_{i'}M_QK$. Now, the element $\widetilde{f}'+\widetilde{f}^*$ provides the desired element.
\end{proof}

We shall need the following formula. \\

\begin{lemma} (c.f. \cite[Proposition 5.2]{Be87}) \label{lem commutation rel}
Let $K$ be an open compact subgroup of $G$ in a good position. Let $\widetilde{f} \in \lambda^K$. Then $(\lambda_{U_Q^-})(a^{-k}) \circ \mathrm{pr}(\widetilde{f}) =\mathrm{pr}\circ \lambda_K(a^{-k})(\widetilde{f})$.
\end{lemma}
\begin{proof}
By the Iwahori decomposition of $K$, 
\[   K= (U_Q^-\cap K)\cdot (M_Q \cap K) \cdot (U_Q \cap K) .\]
Note that $a^k(U_Q \cap K)a^{-k} \subset K$ by the strict dominance of $a$. We also have $a^k(M_Q\cap K)a^{-k} \subset K$ and so:
\[   \lambda_K(a^{-k})(f) =e_{U_Q^-\cap K} \circ \lambda(a^{-k}) \circ e_K(f) =e_{U_Q^-\cap K} \circ \lambda(a^{-k})(f),
\] 
where $e_{K'}(f)(g)=\int_{h \in K'} f(gh) ~dh$ for $K'=U_{Q}^-\cap K$ or $ K$. (Here $dh$ is a fixed measure with $\int_{h \in K'} \mathrm{ch}_{K'}(h)~dh=1$.) We also remark that $\lambda(a^{-k})$ are simply the group actions of $a^{-k}$ on $\lambda$. Since $e_{U_Q^-\cap K}$ coincides with taking the $U_Q^-\cap K$-coinvariants and $\mathrm{pr}$ coincides with taking $U_Q^-$-coinvaraints, we have that $\mathrm{pr}\circ e_{U_Q^-\cap K}=\mathrm{pr}$. Now the lemma follows by the commutation between $\mathrm{pr}$ and $\lambda(a^{-k})$.

\end{proof}

\begin{proposition} \label{prop canoniclal lift geom filt}
Let $f \in \mathcal J_i^-$. Let $K$ be an open compact subgroup of $G$ such that $f$ is $K \cap M_Q$-invariant (i.e. $f \in (\lambda_{U_Q^-})^{K \cap M_Q}$). Let $K_M=K\cap M_Q$. Let $a$ be a strictly dominant element for $M_Q$. Let $k$ be a sufficiently large integer such that $\lambda_K(a^{-k})$ on $\lambda^K$ is stable. The canonical lift $\widetilde{f}$ of $f$ is in $(\mathcal I_i^-)^K$. Moreover, the canonical lift $\widetilde{f}$ satisfies the property that $\lambda_K(a^{-k})\widetilde{f}\in (\mathcal I_i^-)^K$.
\end{proposition}

\begin{proof}
Let $f'=a^k.f$. Since $a \in Z(M_Q)$ and $f \in (\lambda_{U_Q^-})^{K_M}$, $f' \in (\lambda_{U_Q^-})^{K_M}$. By Corollary \ref{cor k invariant lift}, there exists a lift $\widetilde{f}' \in (\mathcal I_i^-)^K$ of $f'$ satisfying the property (2) in Corollary \ref{cor k invariant lift}. By Lemma \ref{lem commutation rel}, $\mathrm{pr}(\lambda_K(a^{-k})\widetilde{f}')$ is a lift of $f$, but then $\lambda_K(a^{-k})\widetilde{f}'$ is the canonical lift of $f$ by the definition. Now it follows from Proposition \ref{lem support after acting by ak} and Property (2) in Corollary \ref{cor k invariant lift} (for $\widetilde{f}'$) that the canonical lift of $f$ is in $\mathcal I^-_i$. The second assertion follows from the first assertion and $\lambda_K(a^{-i})\lambda_K(a^{-i})=\lambda_K(a^{-2i})$, see e.g. \cite[Lemma 4.1.5]{Ca93}.
\end{proof}


\subsection{Constructing a  lift for $\mathcal J_{i+1}$}





We keep using notations in the previous two subsections. Let $\pi =\mathrm{Ind}_Q^G\sigma$. Let $\mathcal I_i=\mathcal I_i(Q, \sigma)$ and $\mathcal J_i=\mathcal J_i(Q,\sigma)$ as in Section \ref{s geometric lemma and dual}. Note that $\pi^{\vee}=\lambda$. We now have the following lemma for constructing a lift of an element in $\mathcal I_i$. 

\begin{lemma} \label{lem find disjoint support f}
Let $f \in \mathcal J_{i+1}$. Let $K$ be an open compact subgroup of $G$ in a good position such that $f \in (\lambda_{U_Q})^{K_M}$. Let  $\mathcal S\subset \mathcal O_{i}^-$ such that $\mathcal S$ is invariant under right $K$-action i.e.
\[   \left\{ gk : g \in \mathcal S, k \in K\right\} =\mathcal S. 
\]
There exists $\widetilde{f} \in \mathcal I_{i+1}^K$ such that $\mathrm{pr}(\pi_K(a^k)\widetilde{f})=f$ and
\[\mathrm{supp}(\widetilde{f}) \subset \bigsqcup_{i' \geq i+1} P\dot{w}_{i'}M_QK .\]
Moreover,
\[  \mathcal S \cap \mathrm{supp}(\widetilde{f}) =\emptyset .
\]

\end{lemma}

\begin{proof}
A construction for $\widetilde{f}$ satisfying the support property (i.e. the former property) is similar to the one in Corollary \ref{cor k invariant lift}. 

We now consider that the intersection property (i.e. the latter proeprty) follows from the first one. Indeed, 
\[ \mathcal O_i^-\cap  \left(\bigsqcup_{i'\geq i+1} P\dot{w}_{i'}M_Q \right)=\emptyset, \quad \mbox{ and so } \quad \mathcal S \cap \left(  \bigsqcup_{i'\geq i+1} P\dot{w}_{i'}M_Q \right)=\emptyset  ,
\]
where the first isomorphism uses $P\dot{w}_{i'}M_Q \subset P\dot{w}_{i'}Q^-$ and the opposite Bruhat decomposition. Since $\mathcal S$ is invariant under right $K$-action, we then also have that
\[ \mathcal S \cap  \left(\bigsqcup_{i'\geq i+1} P\dot{w}_{i'}M_QK \right) =\emptyset  .
\]
Now the latter property follows from the former one.
\end{proof}

\subsection{Coincidence of filtrations}

\begin{theorem} \label{thm filtrations coincide bc pairing}
Let $P, Q$ be standard parabolic subgroups of $G$. Let $\sigma$ be a smooth representation of $M_P$. We write $\mathcal J_i^{\vee}(Q, \sigma)$ for the filtration on $((\mathrm{Ind}_P^G\sigma)_{U_Q})^{\vee}$ as in Section \ref{ss dual filtration 1}
 with a fixed ordering on $W_{S,T}$. Write $\mathcal J^-_i(Q^-,\sigma^{\vee})$ for the filtration on $(\mathrm{Ind}_P^G\sigma^{\vee})_{U_Q^-}$ as in Section \ref{ss dual geometric lemma}. Then the two filtrations coincide via the Bernstein-Casselman isomorphism in Theorem \ref{thm bernstein-casselman}(1). 

\end{theorem}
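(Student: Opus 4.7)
The plan is to establish the equality of the two filtrations in two stages: first prove the one-sided inclusion $\mathcal J_i^-(Q^-,\sigma^\vee)\subseteq \mathcal J_i^\vee(Q,\sigma)$ under the Bernstein--Casselman isomorphism by a support-disjointness argument, then upgrade it to an equality by comparing graded pieces inductively.

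For the inclusion, I would fix $f\in \mathcal J_i^-$ and $x\in \mathcal J_{i+1}$ and verify that $\langle x,f\rangle_{M_Q}=0$. By Theorem \ref{thm bernstein-casselman}(4), this equals $\langle \widetilde x,\widetilde f\rangle$ for the canonical lift $\widetilde f$ of $f$ and any lift $\widetilde x$ of $x$. Proposition \ref{prop canoniclal lift geom filt} puts $\widetilde f\in \mathcal I_i^-$, so $\mathrm{supp}(\widetilde f)\subseteq \mathcal O_i^-$, while Lemma \ref{lem find disjoint support f} applied to $x$ with index $i+1$ supplies a lift $\widetilde x\in \mathcal I_{i+1}$ with $\mathrm{supp}(\widetilde x)\cap \mathcal O_{i+1}^-=\emptyset$. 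Since $\mathcal O_i^-\subseteq \mathcal O_{i+1}^-$, the supports of $\widetilde x$ and $\widetilde f$ are disjoint, so the integral $\int_{P\setminus G}\langle \widetilde x(g),\widetilde f(g)\rangle_\sigma\,dg$ representing $\langle \widetilde x,\widetilde f\rangle$ vanishes.

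To upgrade to equality I would induct on $i$; the base $\mathcal J_0^-=0=\mathcal J_0^\vee$ is immediate. Assuming equality at $i-1$, the first step yields an injection on graded pieces
\[
\mathcal J_i^-/\mathcal J_{i-1}^-\ \hookrightarrow\ \mathcal J_i^\vee/\mathcal J_{i-1}^\vee,
\]
and I would show surjectivity. By exactness of the Jacquet functor applied to the short exact sequences $0\to \mathcal I_{i-1}^-\to \mathcal I_i^-\to C_c^\infty(P\dot u_iQ^-,\sigma^\vee)\to 0$ and $0\to \mathcal J_{i+1}\to \mathcal J_i\to C_c^\infty(P\dot u_iQ,\sigma)_{U_Q}\to 0$, the left side is $C_c^\infty(P\dot u_iQ^-,\sigma^\vee)_{U_Q^-}$ and the right side is the smooth dual of $C_c^\infty(P\dot u_iQ,\sigma)_{U_Q}$. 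The geometric lemma (\ref{eqn isomoprhism geometric lemma}) and its variant for the opposite orbit realize each as a parabolically induced $M_Q$-representation, and the second adjointness applied inside $M_P$ (identifying the Jacquet functor of $\sigma^\vee$ on the unipotent radical of $P\cap \dot u_iQ^-\dot u_i^{-1}$ with the smooth dual of the Jacquet functor of $\sigma$ on the opposite unipotent) combined with the second adjointness inside $M_Q$ exhibits the two graded pieces as naturally dual $M_Q$-modules under an explicit non-degenerate pairing. Once this duality is matched with the BC-induced pairing, non-degeneracy yields surjectivity and closes the induction.

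The main obstacle is precisely this pairing-matching. Concretely, given a representative $\widetilde x\in \mathcal I_i$ chosen via Lemma \ref{lem find disjoint support f} so that contributions outside the $i$-th orbit are controlled, and the canonical lift $\widetilde f\in \mathcal I_i^-$ from Proposition \ref{prop canoniclal lift geom filt}, the BC integral localizes---modulo contributions killed by the inductive hypothesis---to the Richardson-type intersection $P\dot u_iQ\cap P\dot u_iQ^-$ described by Corollary \ref{cor parabolic intersection}. Unwinding this integral using formula (\ref{eqn isomorphism gl integral}) of the geometric lemma, and then recognizing the inner $\sigma$-valued pairing as the BC pairing for $\sigma$ and $\sigma^\vee$ at the level of $M_P$ (Theorem \ref{thm bernstein-casselman}(1)--(2)), should yield the desired identification of pairings and complete the proof.
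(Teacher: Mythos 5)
Your first step --- the vanishing of $\langle\,,\rangle_{M_Q}$ on $\mathcal J_{i+1}\times\mathcal J_i^-$ via Theorem \ref{thm bernstein-casselman}(4), Proposition \ref{prop canoniclal lift geom filt} for the canonical lift, and the disjoint-support lift from Lemma \ref{lem find disjoint support f} --- is exactly the paper's Step 1, and it is correct as written.

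Your second step, however, has a genuine gap. The entire burden of the reverse inclusion is carried by the claim that the injection $\mathcal J_i^-/\mathcal J_{i-1}^-\hookrightarrow \mathcal J_i^{\vee}/\mathcal J_{i-1}^{\vee}$ is surjective, which you propose to get by identifying both graded pieces as induced $M_Q$-modules and then ``matching'' the abstract duality with the pairing induced by the Bernstein--Casselman pairing. Two problems. First, an injection between two modules that are abstractly isomorphic (here, both identifiable with a smooth dual of $C^{\infty}_c(P\dot u_iQ,\sigma)_{U_Q}$) is not automatically surjective; surjectivity requires precisely that the BC pairing restricted to $(\mathcal J_i/\mathcal J_{i+1})\times(\mathcal J_i^-/\mathcal J_{i-1}^-)$ be non-degenerate in the second variable. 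That non-degeneracy is an orbit-wise version of the theorem itself, and you only assert that unwinding the integral (\ref{eqn isomorphism gl integral}) over the Richardson-type intersection ``should yield'' it. Second, your claim that the BC integral localizes to the $i$-th orbit ``modulo contributions killed by the inductive hypothesis'' is circular: the inductive hypothesis is only available at level $i-1$, while the contributions you must control live on the $i$-th orbit, i.e.\ at exactly the level being proved. So the proposal reduces the theorem to an unproved statement that is essentially equivalent to it.

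The paper closes this step far more cheaply, and you should note how little is actually needed. Given $f'\in\mathcal J_j^-$ with $f'\notin\mathcal J_{j-1}^-$ for some $j>i$, Proposition \ref{prop canoniclal lift geom filt} places its canonical lift $\widetilde f'$ in $\mathcal I_j^-$, and since $f'\notin\mathcal J_{j-1}^-$ one has $\widetilde f'(\dot u_jq)\neq 0$ for some $q\in Q$. One then builds an explicit test function $\widetilde f\in\mathrm{Ind}_P^G\sigma$ supported on a single coset $P\dot u_jqK'$ on which $\widetilde f'$ is constant, chosen so that $\langle \widetilde f(\dot u_jq),\widetilde f'(\dot u_jq)\rangle_{\sigma}=1$; the pairing then evaluates to a nonzero volume (Lemma \ref{lem non degener j- version}). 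Since the image of $\widetilde f$ lies in $\mathcal J_j\subseteq\mathcal J_{i+1}$, this contradicts $f'\in\mathcal J_i^{\vee}$ directly, with no graded-piece duality, no induction, and no computation of the localized integral. Replacing your pairing-matching step with this explicit construction would make the argument complete.
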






The proof of Theorem \ref{thm filtrations coincide bc pairing} is divided into two steps:
\begin{enumerate}
\item We first show that $\langle , \rangle_{M_Q}$ is identically zero on $\mathcal J_{i+1}\times \mathcal J_i^-$.
\item  We next show a non-degeneracy of $\langle , \rangle_{M_Q}$ when restricted to $(\mathcal J_i/ \mathcal J_{i+1}) \times \mathcal J^-_i$.
\end{enumerate}

Let $\pi=\mathrm{Ind}_P^G\sigma$ and let $\lambda=\mathrm{Ind}_P^G(\sigma^{\vee})$ (in the notation of Theorem \ref{thm filtrations coincide bc pairing}). We first prove Step 1.



\begin{lemma} \label{lem pairing radical}
Let $f \in \mathcal J_{i+1}$ and let $f' \in \mathcal J_i^-$. Then $\langle f, f' \rangle_{M_Q}=0$.
\end{lemma}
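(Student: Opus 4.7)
The plan is to realize $\langle f, f' \rangle_{M_Q}$ as an honest integral pairing of two carefully chosen lifts with disjoint supports, and then simply observe that such an integral vanishes. By part (4) of Theorem \ref{thm bernstein-casselman}, I may compute $\langle f, f' \rangle_{M_Q} = \langle \widetilde{f}, \widetilde{f}' \rangle$ for \emph{any} lift $\widetilde{f}$ of $f$ in $\mathrm{Ind}_P^G \sigma$ together with the \emph{canonical} lift $\widetilde{f}'$ of $f'$ in $\mathrm{Ind}_P^G \sigma^\vee$. This asymmetric freedom is exactly what lets me feed in geometric information from both filtrations at the same time.

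On the dual side, Proposition \ref{prop canoniclal lift geom filt} tells me that the canonical lift $\widetilde{f}'$ of an element of $\mathcal J_i^-$ automatically lies in $\mathcal I_i^-$, so $\mathrm{supp}(\widetilde{f}') \subset \mathcal O_i^-$. On the original side, I apply Lemma \ref{lem find disjoint support f} to $f \in \mathcal J_{i+1}$ (i.e.\ with the index shifted by one) to produce a lift $\widetilde{f} \in \mathcal I_{i+1}$ whose support avoids $\mathcal O_{i+1}^-$ and, since $\mathcal O_i^- \subset \mathcal O_{i+1}^-$, also avoids $\mathcal O_i^-$. The two supports are therefore disjoint.

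Given the disjointness of supports, the pairing formula
\[
\langle \widetilde{f}, \widetilde{f}' \rangle = \int_{P \setminus G} \langle \widetilde{f}(g), \widetilde{f}'(g) \rangle_{\sigma}\, dg
\]
has integrand identically zero on $P \setminus G$, which closes the argument.

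All substantive technical content has been established upstream, principally Proposition \ref{prop canoniclal lift geom filt} together with Lemma \ref{lem find disjoint support f} (which is the one that actually does the work of moving the support of a representative of $f$ away from the opposite Bruhat cells). So the only real care needed here is the choice of side on which to demand the canonical lift: taking $\widetilde{f}'$ canonical (and $\widetilde{f}$ merely an arbitrary lift) is essential, because the support-tuning produced by Lemma \ref{lem find disjoint support f} is designed to push a lift of $f$ away from the opposite filtration, not the other way round. The argument would not close as cleanly if one insisted on $\widetilde{f}$ being canonical instead.
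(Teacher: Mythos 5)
Your proposal is correct and follows essentially the same route as the paper: the paper's proof cites Theorem \ref{thm bernstein-casselman}(4), Proposition \ref{prop canoniclal lift geom filt}, and the disjoint-support construction of Lemma \ref{lem find disjoint support f} (packaged as Lemma \ref{lem pairing radical}), which is exactly the combination you use. Your explicit remark that the canonical lift must be taken on the $f'$ side while the support-adjusted lift from Lemma \ref{lem find disjoint support f} is taken on the $f$ side, together with the observation $\mathcal O_i^- \subset \mathcal O_{i+1}^-$, is a correct and slightly cleaner account of the index bookkeeping than the paper's own statement of Lemma \ref{lem pairing radical}.
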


\begin{proof}
Let $K$ be an open compact subgroup such that $f$ and $f'$ are $K\cap M_Q$-invariant. Let $\widetilde{f}$ (resp. $\widetilde{f}'$) be the canonical lift of $f$ (resp. $f'$) in $\pi^K$ (resp. $\lambda^K)$. It follows from Theorem \ref{thm bernstein-casselman}(2) that
\[  \langle f, f' \rangle_{M_Q} =\langle \widetilde{f}, \widetilde{f}' \rangle .
\]
Let $a$ be a strictly dominant element for $M_Q$, and let $k$ be a sufficiently large integer such that $\pi_K(a^k)$ is stable. By Lemma \ref{lem find disjoint support f}, $\widetilde{f}=\pi_K(a^k)\widetilde{h}$ for some $\widetilde{h} \in \pi^K$ with properties 
\[  \mathrm{supp}(\widetilde{h}) \subset \bigsqcup_{i'\geq i+1} P\dot{w}_{i'}M_QK
\]
Thus, now we also have that
\begin{align}
\langle \widetilde{f}, \widetilde{f}' \rangle &= \langle \pi_K(a^k)\widetilde{h}, \widetilde{f}'\rangle \\
                                             &= \langle \widetilde{h}, \pi_K(a^{-k})\widetilde{f} \rangle  \\
																						 & = \int_{P\setminus G}  \langle \widetilde{h}(g) ,   (\pi_K(a^{-k})\widetilde{f})(g)\rangle_{\sigma} ~dg \\
																						 &= 0
\end{align}
where the first three equalities follow from definitions, and the last equality follows from $\mathrm{supp}(\pi_K(a^{-k})\widetilde{f}) \subset \mathcal I_i^-$ in Proposition \ref{prop canoniclal lift geom filt} and so $\mathrm{supp}(\pi_K(a^{-k})\widetilde{f}) \cap \mathrm{supp}(\widetilde{h})=\emptyset$ by Lemma \ref{lem find disjoint support f}. 

Now the lemma follows from combining the above two equations.
\end{proof}

We now prove Step 2. Let $\mathcal J_i^{\vee}=\mathcal J_i^{\vee}(Q,\sigma)$.




\begin{lemma} \label{lem non degener j- version}
Let $f' \in \mathcal J^-_i$, which does not lie in $\mathcal J_{i-1}^-$. Let $K$ be an open compact subgroup of $G$ in a good position such that $f'$ is $K\cap M_Q$-invariant. Let $a$ be a strictly dominant element and let $k$ be a sufficiently large integer such that $\lambda_K(a^{-k})$ is stable. Then there exists $f \in \mathcal J_i$ such that $\langle f, f' \rangle \neq 0$.
\end{lemma}

\begin{proof}
Since $f' \in \mathcal J^-_i-\mathcal J_{i-1}^-$, the canonical lift  $\widetilde{f}'$ of $f'$ and $\lambda_K(a^{-k})\widetilde{f}'$ lie  in $\mathcal I^-_i -\mathcal I^-_{i-1}$ by Proposition \ref{prop canoniclal lift geom filt}. Let $\widetilde{h}=\lambda_K(a^{-k})\widetilde{f}'$. Note that $\mathrm{pr}(\widetilde{h})\not\in \mathcal J_{i-1}^-$ by a similar argument in Lemma \ref{lem commutation rel}. Thus we have $\widetilde{h}(\dot{w}_jq)\neq 0$ for some $q \in Q$. We now have that $\widetilde{h}$ is constant in $\dot{w}_jqK$. 

Now we let $y=\widetilde{h}(\dot{w}_jq)$ and let $x$ be an element in $\sigma^{\vee}$ such that $\langle x, y \rangle_{\sigma}=1$. Let $\widetilde{f}$ be the function in $\mathrm{Ind}_P^G\sigma$ with $\mathrm{supp}(\widetilde{f})=P\dot{w}_jqK$ and $\widetilde{f}(\dot{w}_jqK)=x$. Then 
\[  \langle \pi_K(a^k)\widetilde{f}, \widetilde{f}' \rangle =\langle \widetilde{f}, \lambda_K(a^{-k})\widetilde{f}' \rangle = \langle \widetilde{f}, \widetilde{h} \rangle =\mathrm{vol}(P\setminus P\dot{w}_jqK) \neq 0 ,
\]
where the second equality follows from our construction of $\widetilde{f}$ and (\ref{eqn pairing induced module}).  This then gives that $\langle \mathrm{pr}(\widetilde{h}), f') \rangle_{M_Q}\neq 0$ by (\ref{eqn canonical pairing}) as desired.
\end{proof}

\noindent
{\it Proof of Theorem \ref{thm filtrations coincide bc pairing}.} By Lemma \ref{lem pairing radical}, $ \mathcal J^-_i \subset \mathcal J_i^{\vee}$. We now prove another inclusion. Let $f' \in \mathcal I_i^{\vee}$. If $f' \not\in \mathcal I^-_i$, then $f' \in \mathcal I^-_{j}$ for some $j>i$ and does not lie in $\mathcal I^-_i$. Let $\widetilde{f}'$ be the canonical lift of $f'$. By Proposition \ref{prop canoniclal lift geom filt}, $\widetilde{f}' \in \mathcal I^-_j$. Since $\widetilde{f}'$ does not lie in $\mathcal I^-_i$, its canonical lift also does not lie in $\mathcal I^-_i$. By Lemma \ref{lem non degener j- version}, there exists $\widetilde{f} \in \mathcal J_k$ for $j\geq k >i$ such that $\langle \widetilde{f}, \widetilde{f}' \rangle \neq 0$. Thus, $\langle f, f' \rangle_{M_Q} =\langle \widetilde{f}, \widetilde{f}' \rangle \neq 0$, where $f$ is the projection of $\widetilde{f}$ from $\mathrm{Ind}_P^G\sigma$ onto $(\mathrm{Ind}_P^G\sigma)_{U_Q}$. By definition, $f$ is also in $\mathcal I_k \subset \mathcal I_i$. This contradicts that $f' \in \mathcal I_i^{\vee}$. Hence, $f' \in \mathcal I^-_i$ as desired. \qed

\vspace{1cm}

\begin{remark}
Let $p$ be the residual characteristics of $F$. Let $R$ be a noetherian $\mathbb Z[\frac{1}{p}]$-ring. A recent paper of Dat-Helm-Kurinczuk-Moss \cite{DHKM23} establishes the second adjointness theorem for the category of smooth $R$-representations. Indeed, they also established the stabilization theorem in \cite[Corollary 4.9]{DHKM23}. One may expect that an analogue of Theorem \ref{thm filtrations coincide bc pairing} can be generalized to such category.
\end{remark}


\section{Applications} \label{s consequences}

\subsection{Compositions of maps from inductions}

Let $P$ be a standard parabolic subgroup of $G$. Then, for a smooth representation $\lambda$ of $P$, one defines $\lambda_{U_P}$ to be the space:
\[  \lambda_{U_P} := \delta_{P}^{-\frac{1}{2}}\cdot \frac{\lambda}{\langle u.x-x : x \in \lambda, u \in U_P \rangle} ,
\]
viewed as an $M_P$-representation. This is the same as the one defined in Section \ref{s jacquet functor}, but we emphasis here that we are considering a $P$-representation. Similar notion $\lambda_{U_P^-}$ is used for a smooth $P^-$-representation $\lambda$. Note that such functor is exact in the category of smooth representations of $P$ (see e.g. \cite[Proposition 2.35]{BZ76}).

\begin{corollary} \label{cor composition non-zero in induced rep}
Let $P$ be a standard parabolic subgroup of $G$. Let $\sigma$ be a smooth representation of $M_P$. Let $\pi$ be a non-zero quotient of $\mathrm{Ind}_P^G\sigma$. Let $\lambda=C^{\infty}_c(PQ^-, \sigma)$ be the subspace of $\mathrm{Ind}_P^G\sigma$ containing all functions supported in $PQ^-$, which is viewed as a $Q^-$-module via right translation. Then
\begin{itemize}
\item[(1)] the natural composition $\lambda_{U_Q^-} \hookrightarrow (\mathrm{Ind}_P^G\sigma)_{U_Q^-} \twoheadrightarrow \pi_{U_Q^-}$ is non-zero;
\item[(2)] if $P=Q$, then the map in (a) takes the form: 
\[  \sigma \hookrightarrow (\mathrm{Ind}_P^G\sigma)_{U_P^-} \twoheadrightarrow \pi_{U_P^-}. \]

\end{itemize}
\end{corollary}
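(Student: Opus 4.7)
The plan is to translate the non-vanishing of the composition into a Bernstein--Casselman pairing condition and then invoke Theorem \ref{thm filtrations coincide bc pairing} (in a symmetric form) to identify the relevant annihilator.

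For (2), the case $P = Q$ gives $\lambda_{U_{P^-}} \cong \sigma$ by the $w = \mathrm{Id}$ piece of the $Q^-$-analogue of (\ref{eqn isomoprhism geometric lemma}) applied to the open double coset $PP^-$: since $P \cap U_{P^-} = \{\mathrm{Id}\}$ and $P \cap M_{P^-} = M_P$, the compact induction collapses onto $\sigma$. So (2) follows from (1), and I focus on (1).

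For (1), dualize $\mathrm{Ind}_P^G\sigma \twoheadrightarrow \pi$ and use exactness of Jacquet to obtain an injection $(\pi^\vee)_{U_Q} \hookrightarrow (\mathrm{Ind}_P^G\sigma^\vee)_{U_Q}$. By naturality of the Bernstein--Casselman isomorphism (Theorem \ref{thm bernstein-casselman}(1)) and non-degeneracy of the BC pairing for $\pi$, the composition is non-zero if and only if the image of $(\pi^\vee)_{U_Q}$ in $(\mathrm{Ind}_P^G\sigma^\vee)_{U_Q}$ is not contained in the annihilator of $\lambda_{U_{Q^-}} = \mathcal{J}_1^-(Q^-, \sigma)$ under the BC pairing $(\mathrm{Ind}_P^G\sigma)_{U_{Q^-}} \times (\mathrm{Ind}_P^G\sigma^\vee)_{U_Q} \to \mathbb{C}$. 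Applying the symmetric counterpart of Theorem \ref{thm filtrations coincide bc pairing} (obtained by rerunning the arguments of Section \ref{s match filtrations} with $\sigma \leftrightarrow \sigma^\vee$ and pulling back along the canonical embedding $\sigma \hookrightarrow \sigma^{\vee\vee}$; both Lemma \ref{lem pairing radical} and Lemma \ref{lem non degener j- version} transfer), this annihilator is identified as $\mathcal{J}_2(Q, \sigma^\vee)$. Hence the task reduces to showing that $(\pi^\vee)_{U_Q}$ has non-zero image in $\mathcal{J}_1/\mathcal{J}_2 \cong \mathrm{ind}_{P \cap M_Q}^{M_Q}(\sigma^\vee_{P \cap U_Q})$, the graded piece associated to $P u_1 Q = PQ$ with $u_1 = \mathrm{Id}$.

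For that reduction, pick a non-zero $\phi \in \pi^\vee \subset \mathrm{Ind}_P^G\sigma^\vee$, choose $g_0 \in G$ with $\phi(g_0) \notin \ker(\mathrm{pr}\colon \sigma^\vee \to \sigma^\vee_{P \cap U_Q})$, and replace $\phi$ by the $G$-translate $g_0.\phi \in \pi^\vee$, so that $(g_0.\phi)(\mathrm{Id}) = \phi(g_0)$ has non-zero image under $\mathrm{pr}$. Since $\mathrm{Id} \in PQ$ and $g_0.\phi$ is smooth, feeding this into the integral formula (\ref{eqn isomorphism gl integral}) at $w = \mathrm{Id}$ evaluated at $m = \mathrm{Id}$ yields, after restricting to a sufficiently small open subgroup of $U^{\mathrm{Id}}$, a non-zero element of $\sigma^\vee_{P \cap U_Q}$, which produces a non-zero image of $g_0.\phi$ in $\mathcal{J}_1/\mathcal{J}_2$. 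The main obstacle is the symmetric form of Theorem \ref{thm filtrations coincide bc pairing}---verifying that the arguments of Section \ref{s match filtrations} transfer cleanly to the opposite pairing via the canonical map $\sigma \hookrightarrow \sigma^{\vee\vee}$---together with the smoothness-and-translation manipulation at the end, whose validity hinges on the generic fact that a non-zero $\phi \in \mathrm{Ind}_P^G\sigma^\vee$ cannot have all values in $\ker(\mathrm{pr})$ when $P$ does not preserve that subspace.
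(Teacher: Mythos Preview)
Your overall strategy coincides with the paper's: dualize the quotient map to obtain $\pi^\vee \hookrightarrow \mathrm{Ind}_P^G\sigma^\vee$, apply the Jacquet functor with respect to $U_Q$, and use Theorem~\ref{thm filtrations coincide bc pairing} together with the Bernstein--Casselman isomorphism to identify the composition
\[
(\pi^\vee)_{U_Q} \hookrightarrow (\mathrm{Ind}_P^G\sigma^\vee)_{U_Q} \twoheadrightarrow C_c^\infty(PQ,\sigma^\vee)_{U_Q}
\]
with the smooth dual of the map in (1). The paper simply asserts that this composition is non-zero and moves on; you go further and attempt to justify it, and you are also more explicit than the paper about the need for the symmetric form of Theorem~\ref{thm filtrations coincide bc pairing} and the passage through $\sigma \hookrightarrow \sigma^{\vee\vee}$.

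The final step, however, has a real gap, and it is one you yourself flag. The ``generic fact'' you invoke---that a non-zero $\phi \in \pi^\vee \subset \mathrm{Ind}_P^G\sigma^\vee$ must take some value outside $\ker(\mathrm{pr})$---is not true in general: the set of values $\{\phi(g):g\in G\}$ spans an $M_P$-subrepresentation $\tau^\vee \subset \sigma^\vee$, and nothing prevents $\tau^\vee \subset \ker(\mathrm{pr})$. Concretely, take $G=\mathrm{GL}_3$, let $P$ be the $(2,1)$-parabolic, $Q$ the $(1,2)$-parabolic, and $\sigma = \rho \boxtimes \chi$ with $\rho$ supercuspidal on $\mathrm{GL}_2$. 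Then $M_P \cap U_Q$ is the upper unipotent of the $\mathrm{GL}_2$-block, so $\sigma^\vee_{M_P \cap U_Q}=0$; hence $\ker(\mathrm{pr})=\sigma^\vee$, and correspondingly $\lambda_{U_Q^-}=0$, so the map in (1) is the zero map. (The paper's own proof does not address this point either.) When $P=Q$ the difficulty disappears: there is no integration in (\ref{eqn isomorphism gl integral}) at $w=\mathrm{Id}$, the dual composition becomes $(\pi^\vee)_{U_P}\to\sigma^\vee$, $[\phi]\mapsto\phi(1)$, which is exactly the Frobenius-reciprocity adjoint (Lemma~\ref{lem realize adjoint map}) of the non-zero embedding $\pi^\vee \hookrightarrow \mathrm{Ind}_P^G\sigma^\vee$ and is therefore non-zero. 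For general $Q$, part (1) as stated requires an additional hypothesis such as $\lambda_{U_Q^-}\neq 0$.
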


\begin{proof}
We first prove (1). By taking the dual on the surjection $\mathrm{Ind}_P^G\sigma \twoheadrightarrow \pi$, we have an embedding:
\[   \pi^{\vee} \hookrightarrow \mathrm{Ind}_P^G\sigma^{\vee} 
\]
Taking the Jacquet functor gives:
\begin{align} \label{composition of maps}
   (\pi^{\vee})_{U_Q} \hookrightarrow (\mathrm{Ind}_P^G\sigma^{\vee})_{U_Q} \twoheadrightarrow C^{\infty}_c(PQ, \sigma^{\vee})_{U_Q}
\end{align}
which is non-zero. 

Since the map in Theorem \ref{thm bernstein-casselman}(1) is functorial, the first injection in (\ref{composition of maps}) takes the following form under the isomorphism of Theorem \ref{thm bernstein-casselman}(1):
\[  (\pi_{U_{Q}^-})^{\vee} \hookrightarrow ((\mathrm{Ind}_P^G\sigma)_{U_{Q}^-})^{\vee} 
\]
and is obtained by first taking the Jacquet functor with respect to $Q^-$ on the projection $\mathrm{Ind}_P^G\sigma$ and then taking the smooth dual.

By Theorem \ref{thm filtrations coincide bc pairing}, the second map coincides with the embedding $\lambda_{U_Q^-} \hookrightarrow (\mathrm{Ind}_P^G\sigma)_{U_Q^-}$. Thus  (\ref{composition of maps}) is obtained by taking the smooth dual on $\lambda_{U_Q^-} \hookrightarrow (\mathrm{Ind}_P^G\sigma)_{U_Q^-} \twoheadrightarrow \pi_{U_Q^-}$.  Then we must have the non-zero composition in the first assertion.

We now prove (2). Note that $C^{\infty}_c(PP^-, \sigma)$ is naturally isomorphic to the space of smooth functions from $U_P^-$ to $\sigma$. Taking the Jacquet functor with respect to $U_P^-$ yields an isomorphism to $\sigma$. One checks the $M_P$-action is isomorphic to $ \sigma$.
\end{proof}

The above shows a phenomenon that is probably not expected by many people at the first glance, and we provide an example to illustrate more details. 

\begin{example}
Let us illustrate Corollary \ref{cor composition non-zero in induced rep} with an example. We consider $G=\mathrm{GL}_2(F)$. Let $B$ be the subgroup of upper triangular matrices in $G$. Let $U$ be the subgroup of unipotent upper triangular matrices in $G$, and let $U^-$ be the subgroup of unipotent lower triangular matrices in $G$. The subgroup $T$ of diagonal matrices is naturally identified with $F^{\times} \times F^{\times}$ via the following map:
\[  \begin{pmatrix}  a_1 &  \\ & a_2 \end{pmatrix} \mapsto (a_1, a_2) .
\]
Let $\nu:=|.|_F$ be the normalized absolute value on $F$. Define $\chi: F^{\times} \times F^{\times}\rightarrow \mathbb C$ given by $\chi((a_1, a_2))=\nu(a_1)^{\frac{1}{2}} \cdot \nu(a_2)^{-\frac{1}{2}}$. 

Let $\mathbb C_{\mathrm{triv}}$ be the trivial representation of $\mathrm{GL}_2(F)$, and let $\mathrm{St}$ be the Steinberg representation of $\mathrm{GL}_2(F)$. It is well-known that
\[  (\mathbb C_{\mathrm{triv}})_{U^-} \cong \chi, \quad \mathrm{St}_{U^-} \cong \chi^{-1} .
\]
The parabolically induced representation $\mathrm{Ind}_B^G\chi$ has a unique simple quotient, and the simple quotient is the trivial representation $\mathbb C_{\mathrm{triv}}$. Then the quotient map $\mathrm{Ind}_B^G\chi \rightarrow \mathbb C_{\mathrm{triv}}$ induces the surjection in Corollary \ref{cor composition non-zero in induced rep}(2). On the other hand, the space $C^{\infty}_c(BB^-, \chi)_{U_B^-}$ is just isomorphic to $\chi$, as $T$-representations. Since $(\mathrm{Ind}_B^G\chi)_{U_B^-}$ contains only $\chi$ and $\chi^{-1}$ (and $\chi\not\cong \chi^{-1}$), we must have that composition in Corollary \ref{cor composition non-zero in induced rep}(2) is non-zero by simply comparing the characters in the spaces.

Indeed, there is a subtly when one first looks at this example. If one considers the composition factors of $\mathrm{Ind}_B^G\chi$, then one has the short exact sequence:
\[  0 \rightarrow \mathrm{St} \rightarrow \mathrm{Ind}_B^G\chi \rightarrow \mathbb{C}_{\mathrm{triv}} \rightarrow 0 . \]
Taking the Jacquet functor $U^-$, one has:
\begin{align} \label{eqn first filtration}
 0 \rightarrow \chi^{-1} \rightarrow (\mathrm{Ind}_B^G\chi)_{U^-}  \rightarrow  \chi \rightarrow 0 .
\end{align}

On the other hand, if one considers the filtration from the double cosets in $B\setminus G/B^-$ (the one in Section \ref{ss dual geometric lemma}), one has:
\[   C^{\infty}_c(B\setminus BB^-, \chi)_{U^-} \cong \chi, \quad C^{\infty}_c(B\setminus B\begin{pmatrix} 0 & 1 \\ -1 & 0 \end{pmatrix}, \chi)_{U^-}\cong \chi^{-1} .
\]
 One then obtains the short exact sequence:
\begin{align} \label{eqn second filtration}
  0 \rightarrow \chi \rightarrow  (\mathrm{Ind}_B^G\chi  )_{U^-} \rightarrow \chi^{-1} \rightarrow 0 .
\end{align}
We emphasis that the normalizing factor is important in the computation. 

Combining the right part of former sequence and the left part of latter seuqence, one has the map in Corollary \ref{cor composition non-zero in induced rep}:
\[ 0 \rightarrow \chi \rightarrow  (\mathrm{Ind}_B^G\chi  )_{U^-} \rightarrow \chi \rightarrow 0 .
\]
This implies that $\chi$ splits in the sequence and $\chi$ is a direct summand in $(\mathrm{Ind}_B^G\chi  )_{U^-}$, as a consequence.

These two filtrations (\ref{eqn first filtration}) and (\ref{eqn second filtration}) are opposites, and indeed, this suggests that one may obtain additional information when attempting to combine them. This illustrates a key aspect discussed in \cite{Ch22+, Ch22+b}, particularly in cases where the injection in Corollary \ref{cor composition non-zero in induced rep}(2) splits. The same idea is also applied to graded Hecke algebras to investigate a splitting of induced modules in \cite[Section 10]{CH25+}. Additionally, we note that a dual version can be considered in Lemma \ref{lem realize adjoint map} below.
\end{example}

\subsection{The adjoint map} \label{ss adjointness map}

\begin{lemma} \label{lem embedding by double duals}
Let $\pi_1, \pi_2$ be smooth representations of $G$. Let $f: \pi_1 \rightarrow \pi_2^{\vee}$. Let $f': \pi_2 \rightarrow \pi_1^{\vee}$ be the map given by $f'(x)(y)=f(y)(x)$ for $x \in \pi_2$ and $y \in \pi_1$. Let $f^{\vee}: (\pi_2^{\vee})^{\vee} \rightarrow \pi_1^{\vee}$ given by $f^{\vee}(h)(y)=h(f(y))$ for $h \in (\pi_2^{\vee})^{\vee}$ and $y \in \pi_1$. Let $\iota: \pi_2 \rightarrow (\pi_2^{\vee})^{\vee}$ be the embedding given by: for $x \in \pi_2$ and $f\in \pi_2^{\vee}$, $\iota(x)(f)=f(x)$. Then $f'=f^{\vee}\circ \iota$.
\end{lemma}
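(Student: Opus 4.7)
The plan is to verify the identity $f' = f^{\vee} \circ \iota$ by unwinding the definitions and checking that both maps send an element $x \in \pi_2$ to the same element of $\pi_1^{\vee}$. Since all three maps $f'$, $f^{\vee}$, $\iota$ are given by explicit formulas, this should reduce to a one-line diagram chase after fixing arbitrary test elements $x \in \pi_2$ and $y \in \pi_1$.

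Concretely, I would fix $x \in \pi_2$ and $y \in \pi_1$ and compute $(f^{\vee} \circ \iota)(x)(y)$ by first applying the definition of $f^{\vee}$ (with $h = \iota(x)$), turning the expression into $\iota(x)(f(y))$, then applying the definition of $\iota$ (with $f = f(y) \in \pi_2^{\vee}$), obtaining $f(y)(x)$. This exactly matches the defining formula $f'(x)(y) = f(y)(x)$, so the two maps agree pointwise. Since this equality holds for every $y \in \pi_1$, the elements $(f^{\vee} \circ \iota)(x)$ and $f'(x)$ of $\pi_1^{\vee}$ coincide, and then varying $x \in \pi_2$ gives the claim.

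There is essentially no obstacle: the statement is a formal consequence of the definitions, and no representation-theoretic content (smoothness, $G$-equivariance, admissibility, etc.) is used beyond ensuring the maps $f^{\vee}$ and $\iota$ land in the smooth duals, which is automatic since $f^{\vee} \circ \iota$ is a composition of $G$-equivariant maps between smooth representations. The only point worth double-checking is that the two definitions of $f^{\vee}(h)(y) = h(f(y))$ and $\iota(x)(\varphi) = \varphi(x)$ are combined in the correct order; a careful bookkeeping of which variable plays which role (the element of $\pi_2^{\vee}$ is $f(y)$, while $h = \iota(x)$ is the element of $(\pi_2^{\vee})^{\vee}$) suffices.
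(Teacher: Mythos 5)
Your computation is correct and is exactly the routine unwinding of definitions that the paper itself omits (the paper simply states the lemma is straightforward and gives no proof). The chain $(f^{\vee}\circ\iota)(x)(y)=\iota(x)(f(y))=f(y)(x)=f'(x)(y)$ is the whole content, and your bookkeeping of which object lives in $\pi_2^{\vee}$ versus $(\pi_2^{\vee})^{\vee}$ is accurate.
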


The above lemma is straightforward and we omit the details. The following lemma is a dual version of Corollary \ref{cor composition non-zero in induced rep}(2).

\begin{lemma} \label{lem realize adjoint map}
Let $P$ be a standard parabolic subgroup of $G$. Let $\sigma$ be a smooth representation of $M_P$. Let $f \in \mathrm{Hom}_G(\pi, \mathrm{Ind}_P^G\sigma)$. Via Frobenius reciprocity, the corresponding map in $\mathrm{Hom}_{M_P}(\pi_{U_P}, \sigma)$ is given by the composition:
\[   \pi_{U_P} \stackrel{\widetilde{f}}{\rightarrow} (\mathrm{Ind}_P^G\sigma)_{U_P} \twoheadrightarrow C^{\infty}_c(P, \sigma)_{U_P} \cong \sigma ,
\]
where $\widetilde{f}$ is descended from $f$ by taking the Jacquet functor.
\end{lemma}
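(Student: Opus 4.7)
The plan is to show both maps $\pi_{U_P} \to \sigma$ agree by tracing an element through each. The standard form of Frobenius reciprocity for parabolic induction is that $f \in \mathrm{Hom}_G(\pi, \mathrm{Ind}_P^G\sigma)$ corresponds to $\bar f \in \mathrm{Hom}_{M_P}(\pi_{U_P}, \sigma)$ given by $\bar f([v]) = f(v)(e)$, where $e$ is the identity of $G$ and $[v]$ is the class of $v \in \pi$. Well-definedness is the usual check: for $u \in U_P$, $f(u \cdot v)(e) = (u \cdot f(v))(e) = f(v)(u) = f(v)(e)$, since $U_P \subset P$ acts trivially on $\sigma$.

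To unpack the composition in the statement, I would take $Q = P$ with corresponding subset $S \subset \Phi$. The enumeration of $W_{S,S}$ by non-decreasing length in Section \ref{ss geometric lemma} puts $u_1 = \mathrm{Id}$ first, and $P \dot u_1 P = P$ is the closed orbit. By the discussion in Section \ref{ss geometric lemma}, the top quotient $\mathcal J_1/\mathcal J_2$ of the filtration on $(\mathrm{Ind}_P^G\sigma)_{U_P}$ is $C_c^\infty(P, \sigma)_{U_P}$, which coincides with $C_c^\infty(P, \sigma)$ since $U_P$ acts trivially on the latter. Specializing the isomorphism (\ref{eqn isomorphism gl integral}) to $w = \mathrm{Id}$: one has $P^w \cap M_P = M_P$; the subgroup $P \cap \dot w U_P \dot w^{-1} = U_P$ acts trivially on $\sigma$, so the projection $\mathrm{pr}$ is the identity; and the unipotent integration domain degenerates to a point. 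Therefore the isomorphism $C_c^\infty(P, \sigma)_{U_P} \cong \sigma$ reduces to $h \mapsto h(e)$.

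Chasing the composition then gives $[v] \mapsto \widetilde f([v]) = [f(v)] \mapsto f(v)(e)$, which equals $\bar f([v])$. The only substantive technical point is confirming the degenerate form of the geometric lemma isomorphism for $w = \mathrm{Id}$; this is a routine bookkeeping check of conventions, with no real analytic content, so the argument will be quite short.
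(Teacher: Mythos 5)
Your proposal is correct and follows essentially the same route as the paper: both arguments boil down to the observation that the Frobenius reciprocity adjoint of $f$ is $[v]\mapsto f(v)(e)$ and that the projection $\mathrm{Ind}_P^G\sigma \twoheadrightarrow C^{\infty}_c(P,\sigma)\cong\sigma$ is likewise evaluation at the identity. You simply spell out the degenerate $w=\mathrm{Id}$ case of the geometric lemma isomorphism in more detail than the paper does.
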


\begin{proof}
Recall that the adjoint map $F$ of $f$ is given by 
\[   F(x)=f(x)(1),
\]
where $1$ is the identity element in $G$ \cite{BZ76}. Note that the projection $\mathrm{Ind}_P^G\sigma$ to $C^{\infty}_c(P, \sigma)$ coincides with the evaluation map of a function of $\mathrm{Ind}_P^G\sigma$ at the identity. Thus the two maps coincide.
\end{proof}

We now recall the relation between the second adjointness theorem and the Bernstein-Casselman pairing. Let $\pi$ be a smooth representation of $G$ and let $\sigma$ be a smooth representation of $M_P$. It follows from \cite[Page 63]{Be92} that 
\begin{align*}
 \mathrm{Hom}_G(\mathrm{Ind}_P^G\sigma, \pi^{\vee}) &\cong \mathrm{Hom}_G(\pi, (\mathrm{Ind}_P^G\sigma)^{\vee}) \\
                                                    &\cong \mathrm{Hom}_G(\pi, \mathrm{Ind}_P^G(\sigma^{\vee})) \\
																										&\cong \mathrm{Hom}_G(\pi_{U_P}, \sigma^{\vee}) \\
																										&\cong \mathrm{Hom}_{M_P}(\sigma, (\pi_{U_P})^{\vee}) \\
																										&\cong \mathrm{Hom}_{M_P}(\sigma, (\pi^{\vee})_{U_P^-}),
\end{align*}
where the first ismorphism follows by taking dual maps, the second isomorphism follows from \cite{BZ76}, the third isomorphism follows from Frobenius reciprocity, the fourth isomorphism follows from taking dual maps again and the last isomorphism follows from Theorem \ref{thm bernstein-casselman}(1). The above isomorphism determines the second adjointness theorem.

We now use our result to recover a result of Bezrukavnikov-Kazhdan.

\begin{corollary} \label{cor embedding from second adjointness} \cite{BK15}
Let $\pi$ be a smooth representation of $G$. Let $P$ be a standard parabolic subgroup of $G$ and let $\sigma$ be a smooth representation of $M_P$. Let $f \in \mathrm{Hom}_G(\mathrm{Ind}_P^G\sigma, \pi)$.  Let $U^-=U^-_P$. Then, the map $\sigma \hookrightarrow \pi_{U^-}$ arising from the adjointness coincides with taking the Jacquet functor $U^-$ on the following composition of maps:
\[ C^{\infty}_c(PP^-, \sigma) \hookrightarrow \mathrm{Ind}_P^G\sigma \twoheadrightarrow \pi 
\]
with the ismorphism $C^{\infty}_c(PP^-,\sigma)_{U^-}\cong  \sigma$.
\end{corollary}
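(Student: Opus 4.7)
The plan is to trace the defining chain of natural isomorphisms for the second adjointness, using Lemmas \ref{lem realize adjoint map} and \ref{lem embedding by double duals} at the Frobenius reciprocity and dualization steps, and then invoke Theorem \ref{thm filtrations coincide bc pairing} at the Bernstein-Casselman step in order to identify the relevant distinguished filtration pieces with $\sigma$ and with $C^\infty_c(PP^-, \sigma)$, respectively.

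More concretely, starting from $f \in \mathrm{Hom}_G(\mathrm{Ind}_P^G\sigma, \pi)$, I would first dualize to obtain $f^\vee \colon \pi^\vee \to (\mathrm{Ind}_P^G \sigma)^\vee \cong \mathrm{Ind}_P^G \sigma^\vee$. Frobenius reciprocity combined with Lemma \ref{lem realize adjoint map} will show that the corresponding $M_P$-map $\phi \colon (\pi^\vee)_{U_P} \to \sigma^\vee$ factors as the descent of $f^\vee$ to Jacquet functors, followed by projection onto the top piece $C^\infty_c(P, \sigma^\vee) \cong \sigma^\vee$ of the geometric lemma filtration on $(\mathrm{Ind}_P^G \sigma^\vee)_{U_P}$. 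Applying Lemma \ref{lem embedding by double duals} to dualize $\phi$ then yields a map $\phi^\vee \colon \sigma \to ((\pi^\vee)_{U_P})^\vee$, and composing with the Bernstein-Casselman isomorphism of Theorem \ref{thm bernstein-casselman}(1) (together with the canonical embedding $\pi \hookrightarrow \pi^{\vee\vee}$ and the exactness of the Jacquet functor) will recover the adjoint map $\sigma \to \pi_{U_P^-}$.

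The key step is to identify the image of the inclusion $(C^\infty_c(P, \sigma^\vee))^\vee \hookrightarrow ((\mathrm{Ind}_P^G\sigma^\vee)_{U_P})^\vee$ under the Bernstein-Casselman isomorphism $((\mathrm{Ind}_P^G \sigma^\vee)_{U_P})^\vee \cong (\mathrm{Ind}_P^G \sigma)_{U_P^-}$. Here Theorem \ref{thm filtrations coincide bc pairing} will precisely match the dual of the top piece of the geometric lemma filtration $\mathcal{J}_\bullet(P, \sigma^\vee)$ with the bottom piece of the opposite geometric filtration $\mathcal{J}_\bullet^-(P^-, \sigma)$, which is exactly $C^\infty_c(PP^-, \sigma)_{U_P^-}$. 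Combining this with the canonical $M_P$-equivariant isomorphism $\sigma \cong C^\infty_c(PP^-, \sigma)_{U_P^-}$ (the same identification used in Corollary \ref{cor composition non-zero in induced rep}(2)) will exhibit the adjoint map as the Jacquet descent of the composition $C^\infty_c(PP^-, \sigma) \hookrightarrow \mathrm{Ind}_P^G \sigma \twoheadrightarrow \pi$.

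The main technical obstacle will be the bookkeeping of compatibilities, since several natural identifications and multiple dualizations are involved; in particular one must check that each square in the chain of isomorphisms commutes, and that the Bernstein-Casselman isomorphism intertwines the two geometric filtrations as asserted. However, the needed compatibilities all follow from functoriality in Theorem \ref{thm bernstein-casselman} and the explicit pairing description in Theorem \ref{thm bernstein-casselman}(2), so no computational input beyond the already-established results is required.
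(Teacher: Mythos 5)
Your proposal is correct and follows essentially the same route as the paper: dualize $f$, apply Lemma \ref{lem realize adjoint map} to identify the Frobenius adjoint with the projection onto the closed-orbit piece $C^{\infty}_c(P,\sigma^{\vee})$, dualize back via Lemma \ref{lem embedding by double duals}, and use Theorem \ref{thm filtrations coincide bc pairing} to match that piece under the Bernstein--Casselman isomorphism with the open-cell subspace $C^{\infty}_c(PP^-,\sigma)_{U^-}\cong\sigma$, reducing general $\pi$ to the case of a dual representation via $\pi\hookrightarrow\pi^{\vee\vee}$. The only organizational difference is that the paper first treats targets of the form $\pi^{\vee}$ and then deduces the general case by functoriality, whereas you fold the double-dual embedding into the main chain, which amounts to the same bookkeeping.
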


\begin{proof}

Let $U=U_P$. We begin with a non-zero map $f$ in $ \mathrm{Hom}_{G}(\mathrm{Ind}_P^G\sigma, \pi^{\vee})$. Then let $f^{\vee}$ be the corresponding dual map in $\mathrm{Hom}_G(\pi, (\mathrm{Ind}_P^G\sigma)^{\vee})$. By Lemma \ref{lem realize adjoint map}, the adjoint map of $f$ in $\mathrm{Hom}_{G}(\pi_U, \sigma)$ is given by:
\[  \pi_U \hookrightarrow (\mathrm{Ind}_P^G\sigma^{\vee})_U \twoheadrightarrow C^{\infty}_c( P, \sigma^{\vee})_U \cong \sigma^{\vee},
\]
where the maps are natural maps in Lemma \ref{lem realize adjoint map}. 

Taking the dual and the Bernstein-Casselman isomorphism gives the following non-zero composition of maps:
\begin{align} \label{eqn compose second adjoint}
   (\pi^{\vee})_{U^-} \leftarrow  ((\mathrm{Ind}_P^G\sigma^{\vee})^{\vee})_{U^-} \hookleftarrow  C^{\infty}_c(PP^-, (\sigma^{\vee})^{\vee})_{U^-} \cong (\sigma^{\vee})^{\vee},
\end{align}
where the first map follows from the functorial isomorphism of Bernstein-Casselman and the second map follows from our matching of filtrations in Theorem \ref{thm filtrations coincide bc pairing}. 

In order to obtain the adjoint map in $\mathrm{Hom}_{M_P}(\sigma, (\pi^{\vee})_{U^-})$, we then compose (\ref{eqn compose second adjoint}) with the natural embedding $\sigma \hookrightarrow \sigma^{\vee}{}^{\vee}$. By functoriality, we also have the following commutative diagram:
\[ \xymatrix{  (\pi^{\vee})_{U^-}   &  \ar@{->>}[l]_{h} (\mathrm{Ind}_P^G(\sigma^{\vee}{}^{\vee}))_{U^-}  &  \ar@{_{(}->}[l] C^{\infty}_c( PP^-, \sigma^{\vee}{}^{\vee})_{U^-} \cong \sigma^{\vee}{}^{\vee} \\
              &     \ar@{->}[u]^{s} (\mathrm{Ind}_P^G\sigma)_{U^-} & \ar@{_{(}->}[l]^{i}  \ar@{->}[u] C^{\infty}_c( PP^-, \sigma)_{U^-}\cong \sigma },
\]
where $h$ is the natural map obtained from $f$ via taking dual, Jacquet functors and the Bernstein-Casselman pairing, and $i$ is the map induced from the embedding as a subspace.

The natural map from $\mathrm{Ind}_P^G\sigma$ to $\mathrm{Ind}_P^G(\sigma^{\vee}{}^{\vee})$ coincides with the natural embedding $\mathrm{Ind}_P^G\sigma$ to $(\mathrm{Ind}_P^G\sigma)^{\vee}{}^{\vee}$ under the isomorphism between $\mathrm{Ind}_P^G(\sigma^{\vee}{}^{\vee})$ and $(\mathrm{Ind}_P^G\sigma)^{\vee}{}^{\vee}$,  by Lemma \ref{lem embedding by double duals}. Thus, by Lemma \ref{lem realize adjoint map}, $h \circ s$ is obtained from $f$ via taking the Jacquet functor. Using the composition $h\circ s \circ i$, we obtain the desired description of the adjoint map of $f$ in the case that $f$ is in $\mathrm{Hom}_G(\mathrm{Ind}_P^G\sigma, \pi^{\vee})$.

We now consider general $\pi$ (not necessarily of the form $\pi^{\vee}$). Let $f \in \mathrm{Hom}_G(\mathrm{Ind}_P^G\sigma, \pi)$. One considers the following embedding:
\[  0 \rightarrow \pi \rightarrow (\pi^{\vee})^{\vee} .
\]
Hence, we have an injection from $\mathrm{Hom}_G(\mathrm{Ind}_P^G\sigma, \pi)$ to $\mathrm{Hom}_G(\mathrm{Ind}_P^G\sigma, \pi^{\vee}{}^{\vee})$. Hence, $f$ descends to a map, denoted by $\bar{f}$, in  $\mathrm{Hom}_G(\mathrm{Ind}_P^G\sigma, \pi)$. 
By functoriality of the second adjointness, we then have the following commutative diagram
\[  \xymatrix{  \pi_{U^-} \ar[d]   &  & \ar[ll]_{\bar{f}} \sigma \ar@{=}[d] \\ 
                (\pi^{\vee}{}^{\vee})_{U^-} & \ar[l] (\mathrm{Ind}_P^G\sigma)_{U^-} & \ar[l]  C^{\infty}_c(PP^-, \sigma)_{U^-}  } ,
\]
where the bottom horizontal maps are the maps whose composition gives the adjoint map of $\bar{f}$ shown above, the left vertical map is induced from the natural embedding $\pi$ to $\pi^{\vee}{}^{\vee}$. This then gives the desired description of the map $\bar{f}$ in general case.
\end{proof}

\subsection{Applications to derivatives for $\mathrm{GL}$} \label{ss appl derivatives}

Let $G_n=\mathrm{GL}_n(F)$ be the general linear group over a non-Archimedean local field $F$. Let $P_{n_1,n_2}$ be the parabolic subgroup containing $\mathrm{diag}(g_1, g_2)$ for $g_1 \in G_{n_1}$ and $g_2 \in G_{n_2}$ and upper triangular matrices.  For smooth representations $\pi_1$ and $\pi_2$ of $G_{n_1}$ and $G_{n_2}$ respectively, we shall denote by $\pi_1 \times \pi_2$ the normalized parabolic induction $\mathrm{Ind}^{G_{n_1+n_2}}_{P_{n_1,n_2}} (\pi_1 \boxtimes \pi_2)$.

For a smooth representation $\pi$ of $G_n$, we shall denote the 'left' $i$-th Bernstein-Zelevinsky derivative by ${}^{(i)}\pi$, which is a $G_{n-i}$-representation. We shall not recall the definitions of those derivatives and one may see e.g. \cite{BZ77, Ch22+c}. 

The level of an irreducible representation $\pi$ of some $G_n$ is the largest integer $i^*$ such that ${}^{(i^*)}\pi\neq 0$. It is known that in such case, ${}^{(i^*)}\pi$ is irreducible, and we shall also write ${}^-\pi ={}^{(i^*)}\pi$ (usually referred as the left Bernstein-Zelevinsky highest derivative of $\pi$).

\begin{corollary} \label{cor descending derivatives}
Let $\tau$ be an irreducible representation of $G_n$ and let $\sigma$ be a smooth representation of $G_k$. Let $\pi$ be an irreducible quotient of $\sigma \times \tau$. Suppose the level of $\pi$ is equal to the level of $\tau$. Then ${}^-\pi$ is an irreducible quotient of $\sigma \times {}^-\tau$. 
\end{corollary}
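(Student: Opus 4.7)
The plan combines Corollary~\ref{cor composition non-zero in induced rep}(1) with the Bernstein--Zelevinsky realization of the highest derivative as a twisted Jacquet functor on the $G_d$-factor. Let $d$ be the common level, and set $G=G_{k+n}$, $P=P_{k,n}$, $Q=P_{k+n-d,d}$. Applied with this $(P,Q)$ to the given surjection $\sigma\times\tau\twoheadrightarrow\pi$, Corollary~\ref{cor composition non-zero in induced rep}(1) yields a non-zero $M_Q$-equivariant composition
\[
  \alpha\colon\ \lambda_{U_Q^-}\ \longrightarrow\ \pi_{U_Q^-},\qquad \lambda=C_c^{\infty}(PQ^-,\sigma\boxtimes\tau).
\]
Evaluating the geometric lemma isomorphism~(\ref{eqn isomoprhism geometric lemma}) at the open double coset $PQ^-$ identifies
\[
  \lambda_{U_Q^-}\ \cong\ \mathrm{ind}_{P_{k,n-d}\times G_d}^{G_{k+n-d}\times G_d}\bigl(\sigma\boxtimes\tau_{V^-}\bigr),
\]
where $V^-\subset G_n$ is the unipotent radical opposite to $P_{n-d,d}$; here one uses the intersection computations $P\cap U_Q^-\cong V^-$ (viewed inside the $G_n$-block) and $P\cap M_Q\cong P_{k,n-d}\times G_d$.

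Next I would apply the exact twisted Jacquet functor $\Xi$ on the $G_d$-factor which implements the highest $G_d$-derivative, that is, $\Xi(\rho\boxtimes\mu)=\rho\otimes\mu^{(d)}$ on external tensor products. Two standard Bernstein--Zelevinsky compatibilities are used: first, the transitivity of the derivative through $Q$ gives $\Xi(\pi_{U_Q^-})\cong{}^{-}\pi$; second, the identity $(\tau_{V^-})^{(d)}\cong{}^{-}\tau$ follows from the same transitivity applied inside $G_n$, together with the hypothesis $\mathrm{level}(\tau)=d$ which ensures the mirabolic computation terminates cleanly at the $d$-th stage. Because $\Xi$ commutes with the outer parabolic induction (it only touches the $G_d$-factor), applying $\Xi$ to $\alpha$ then yields a candidate $G_{k+n-d}$-equivariant homomorphism
\[
  \Phi\colon\ \sigma\times{}^{-}\tau\ \longrightarrow\ {}^{-}\pi.
\]

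The main obstacle is verifying that $\Phi$ is non-zero. Since $\pi$ is irreducible of level $d$, the highest derivative ${}^{-}\pi$ is irreducible (a classical fact), so as soon as $\Phi\neq 0$ is established, $\Phi$ is automatically surjective and the corollary follows. For non-vanishing, one inspects the full geometric filtration on $(\sigma\times\tau)_{U_Q^-}$ from Section~\ref{ss dual geometric lemma}: a Leibniz-type computation shows that each non-open subquotient becomes under $\Xi$ an induced representation involving some strictly lower derivative $\tau^{(d-i)}$ with $i\geq 1$, and Theorem~\ref{thm filtrations coincide bc pairing} controls precisely how these pieces realize themselves as subquotients of $\pi_{U_Q^-}$. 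The level condition $\pi^{(j)}=0$ for $j>d$, combined with the irreducibility of ${}^{-}\pi=\Xi(\pi_{U_Q^-})$, rules out that the open-orbit contribution falls into the kernel of $\Xi$; thus $\Phi\neq 0$ and the proof is complete.
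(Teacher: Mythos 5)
Your overall strategy---feed the surjection $\sigma\times\tau\twoheadrightarrow\pi$ into Corollary \ref{cor composition non-zero in induced rep} and then push the open-cell non-vanishing through the twisted Jacquet functor that computes the highest derivative---is the same as the paper's. But the step you yourself flag as ``the main obstacle'' is where your argument has a genuine gap. Since $\Xi$ is exact, $\mathrm{im}(\Xi(\alpha))\cong\Xi(\mathrm{im}\,\alpha)$, so what you must show is that the particular subrepresentation $\mathrm{im}(\alpha)\subset\pi_{U_Q^-}$ is not annihilated by $\Xi$. Your argument---that the non-open pieces of the filtration of $(\sigma\times\tau)_{U_Q^-}$ produce lower derivatives of $\tau$ under $\Xi$, and that the level condition together with irreducibility of ${}^-\pi$ therefore forces the open-orbit contribution to survive---does not establish this. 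If $\Xi(\alpha)=0$, then ${}^-\pi\cong\Xi(\pi_{U_Q^-}/\mathrm{im}\,\alpha)$ is a quotient of a module filtered by the $\Xi$-images of the non-open pieces; these are of the shape $\sigma^{(j)}\times\tau^{(d-j)}$ (up to twists) with $j\geq 1$, and nothing you assume prevents them from being non-zero (no bound on the level of $\sigma$ is given) or from admitting ${}^-\pi$ as a quotient. Knowing \emph{where else} ${}^-\pi$ could occur does not tell you that the quotient $\mathrm{im}(\alpha)$ of the open piece is not killed by $\Xi$. Tellingly, you never use the irreducibility of $\tau$, which is exactly the hypothesis that makes this step work.

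The paper closes the gap by taking $Q=P=P_{k,n}$, so that the open-cell statement becomes Corollary \ref{cor composition non-zero in induced rep}(2): a non-zero map $\sigma\boxtimes\tau\to\pi_{U_{P}^-}$. Irreducibility of $\tau$ then identifies its image as $\sigma'\boxtimes\tau$ for a non-zero quotient $\sigma'$ of $\sigma$, i.e.\ the map factors as a surjection followed by an \emph{injection}. The exact derivative functor on the $G_n$-factor preserves both, and the resulting source $\sigma'\boxtimes{}^-\tau$ is visibly non-zero because ${}^-\tau\neq 0$; non-vanishing is therefore automatic, and the rest is bookkeeping (Jacquet functors in stages to identify the composite with a map $\sigma\times{}^-\tau\to{}^-\pi$, then irreducibility of ${}^-\pi$ to get surjectivity). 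If you want to keep your choice $Q=P_{k+n-d,d}$, you would need an equally concrete description of $\mathrm{im}(\alpha)$, which your proposal does not supply.
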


\begin{proof}
Let $l$ be the level of $\pi$. Let $U^-$ be the opposite unipotent radical in $P_{k,n}$. It follows from Corollary \ref{cor composition non-zero in induced rep} that we have a non-zero map $\sigma \boxtimes \tau \rightarrow \pi_{U^-}$ from the second adjointness theorem on the quotient map from $\sigma \times \tau$ to $\pi$, and so we have a non-zero map
\[ \sigma \boxtimes \tau \twoheadrightarrow \sigma' \boxtimes \tau \hookrightarrow \pi_{U^-}  \]
for some quotient $\sigma'$ of $\sigma$. On the factor $G_n$ in $G_k \times G_n$, we consider the unipotent subgroup $R$ containing the elements:
\begin{align} \label{eqn defining group for derivatives}
  \begin{pmatrix} I_{n-l} & \\   m & u \end{pmatrix} \subset G_n,
\end{align}
where $m \in \mathrm{Mat}_{l, n-l}$ and $u \in U_l^-$. Here $U_l^-$ is the subgroup of $G_l$ containing all unipotent lower triangular matrices. Let $\psi: R \rightarrow \mathbb C$ be given by $\psi(\begin{pmatrix} I_{n-l} & \\ m & u \end{pmatrix}) =\bar{\psi}(u)$, where $\bar{\psi}$ is a non-degenerate character on $U^-_l$. Thus, by exactness, we obtain a non-zero map:
\[  \sigma \boxtimes \tau_{R, \psi} \twoheadrightarrow \sigma' \boxtimes \tau_{R, \psi} \hookrightarrow (\pi_{U^-})_{R,\psi} .
\]

Now, one defines similarly $R'$ in $G_{n+k}$ by replacing $I_{n-l}$ with $I_{n+k-l}$ and replacing $m \in \mathrm{Mat}_{l,n-l}$ with elements in $\mathrm{Mat}_{l,n+k-l}$. We similarly have a character $\psi'$ on $R'$. 

Now let $\widetilde{R}$ (resp. $\widetilde{\widetilde{R}}$) be the subgroup containing matrices of the form 
\[ \begin{pmatrix} g_1 &  &  \\  * & g_2 &  \\ * & * & u \end{pmatrix} \quad \mbox{(resp. $\begin{pmatrix} g & \\ * & u \end{pmatrix}$)} ,\] 
where $g_1 \in G_k$, $g_2 \in G_{n-l}$ and $u \in U_l^-$ (resp. $g \in G_{n+k-l}$ and $u \in U_l^-$). By taking Jacquet functors in stage, we have a non-zero map
\[  C^{\infty}_c(P\widetilde{R}, \sigma\boxtimes \tau)_{R', \psi'} \hookrightarrow (\sigma\times\tau)_{R',\psi'} \rightarrow  \pi_{R', \psi} .
\]
Since the map $C^{\infty}_c(P\widetilde{R}, \sigma\boxtimes \tau)$ to $\sigma \times \tau$ factors through the injection from $C^{\infty}_c(P\widetilde{R}, \sigma\boxtimes \tau)$ to $C^{\infty}_c(P\widetilde{\widetilde{R}}, \sigma\boxtimes \tau)$, we then also have a non-zero map:
\[  C^{\infty}_c(P\widetilde{\widetilde{R}}, \sigma\boxtimes \tau)_{R', \psi'} \rightarrow \pi_{R', \psi'} .
\]
But, $C^{\infty}_c(P\widetilde{R}, \sigma\boxtimes \tau)_{R', \psi'}$ is isomorphic to a subspace of $\sigma \times {}^-\tau$ (here we have to use that the level of $\tau$ is also $l$) and the second one is isomorphic to ${}^-\pi$. This gives the requied non-zero map and so implies the corollary.
\end{proof}

The above example can be regarded as an instance on how the two operations -- Bernstein-Zelevinsky derivatives and parabolic inductions -- commute. More extensive studies and applications appear in \cite{Ch22+b} and \cite{Ch22+}. One may also see Corollary \ref{cor app on construction of deriv} below for a consequence. While our above application is stated for general linear groups, the statement for Corollary \ref{cor embedding from second adjointness} is more general, and so one may hope to apply similar idea to other situations.

\subsection{Connection to commutative triples and its dual version}
We continue to work on $G_n=\mathrm{GL}_n(F)$ in this subsection.

\begin{definition}
A {\it segment} is a combinatorial datum of the form $[a,b]_{\rho}$ for some cuspidal representation $\rho$ of some $G_k$ and for some $a, b \in \mathbb Z$ with $b-a \in \mathbb Z_{\geq 0}$. 
\end{definition}
To each segment $\Delta$, we associate an essentially square-integrable representation denoted by $\mathrm{St}(\Delta)$ \cite{Ze80} (see \cite{Ch22+c} for more details of the notion). We now introduce a notion of derivatives, which can be seen as a computable replacement for some structure of Bernstein-Zelevinsky derivatives (see \cite{Ch22+c}).

\begin{definition} \label{def unique embedd derivative}
Let $\Delta$ be a segment. Let $\pi$ be an irreducible smooth representation of $G_n$.
\begin{itemize}
\item  Define $D_{\Delta}(\pi)$ to be the unique (if exists) irreducible smooth representation of some $G_k$ such that 
\[  \pi \hookrightarrow D_{\Delta}(\pi) \times \mathrm{St}(\Delta) .
\]
Set $D_{\Delta}(\pi)=0$ if such module does not exist.
\item Define $I_{\Delta}(\pi)$ to be the unique irreducible subrepresentation of $\mathrm{St}(\Delta)\times \pi$. 
\end{itemize}
We also remark that when $D_{\Delta}(\pi)\neq 0$, Frobenius reciprocity also gives that 
\[   \pi_N \twoheadrightarrow D_{\Delta}(\pi) \boxtimes \mathrm{St}(\Delta) ,
\]
where $N$ is the unipotent radical of the parabolic subgroup for $D_{\Delta}(\pi)\times \mathrm{St}(\Delta)$. It is also a standard fact that if $D_{\Delta}(\pi)\neq 0$, there is a unique embedding:
\[    D_{\Delta}(\pi)\boxtimes \mathrm{St}(\Delta) \hookrightarrow \pi_N .
\]
\end{definition}

\begin{definition} \label{def commutative triple} \cite{Ch22+b}
Let $\rho_1$ and $\rho_2$ be cuspidal representations of $G_{n_1}$ and $G_{n_2}$ respectively. Let $\Delta_1=[a_1,b_1]_{\rho_1}, \Delta_2=[a_2,b_2]_{\rho_2}$ be segments. Let $l_k=(b_k-a_k+1)n_k$ ($k=1,2$). Let $\pi$ be an irreducible representation of $G_n$. Let $P_1=P_{l_2+n-l_1,l_1}$, a subgroup of $G_{l_2+n}$ defined in Section \ref{ss appl derivatives}, and similarly $P_2=P_{l_2, n}$. Let $N_1$ be the unipotent radical of $P_1$. We say that $(\Delta_1, \Delta_2, \pi)$ is a {\it pre-commutative triple} if the following composition:
\[      D_{\Delta_1}\circ I_{\Delta_2}(\pi) \boxtimes \mathrm{St}(\Delta_1) \hookrightarrow  (I_{\Delta_2}(\pi))_{N_{1}} \hookrightarrow  (\mathrm{St}(\Delta_2) \times \pi)_{N_{1}} \twoheadrightarrow C^{\infty}_c(P_{2}P_{1}, \mathrm{St}(\Delta_2)\boxtimes \pi)_{N_{1}} ,
\] 
where 
\begin{itemize}
\item the first map is the unique embedding from the remark in Definition \ref{def unique embedd derivative};
\item the second map is induced from the unique embedding $I_{\Delta_2}(\pi) \hookrightarrow \mathrm{St}(\Delta_2)\times \pi$ in Definition \ref{def unique embedd derivative};
\item the third map is induced from the natural surjection from $\mathrm{St}(\Delta_2) \times \pi$ to $C^{\infty}_c(P_{2}P_{1}, \mathrm{St}(\Delta_2)\boxtimes \pi)$ in the geometric lemma.
\end{itemize}

\end{definition}

We now define a dual version:

\begin{definition} \label{def dual derivative integral}
Let $\Delta$ be a segment. Let $\pi$ be an irreducible representation of $G_n$. 
\begin{itemize}
 \item Define $D^{\vee}_{\Delta}(\pi)$ to be the unique (if exists) irreducible representation such that 
\[  D^{\vee}_{\Delta}(\pi) \times\mathrm{St}(\Delta)  \twoheadrightarrow \pi.  \]
Again we set $D^{\vee}_{\Delta}(\pi)=0$ if such representation does not exist.
 \item Define $I^{\vee}_{\Delta}(\pi)$ to be the unique irreducible quotient of $\mathrm{St}(\Delta) \times \pi$.
\end{itemize}
\end{definition}

\begin{definition} \label{def dual commutative triple}
We use the notations in Definition \ref{def commutative triple}. Let $N_{1}^-$ be the unipotent radical of $P_{1}^-$. We say that $(\Delta_1, \Delta_2, \pi)$ is a {\it dual commutative triple} if the following composition:
\[   C^{\infty}_c(P_{2}P_{1}^-, \mathrm{St}(\Delta_2)\boxtimes \pi)_{N_{1}^-}\hookrightarrow     (\mathrm{St}(\Delta_2)\times \pi)_{N_{1}^-} \twoheadrightarrow    I^{\vee}_{\Delta_2}(\pi)_{N_{1}^-} \rightarrow D^{\vee}_{\Delta_1}\circ I^{\vee}_{\Delta_2}(\pi) \boxtimes \mathrm{St}(\Delta_2) 
\]
is non-zero, where 
\begin{itemize}
\item the first map is induced from the natural embedding from $C^{\infty}_c(P_{2}P_{1}^-, \mathrm{St}(\Delta_2)\boxtimes \pi)$ to $\mathrm{St}(\Delta_2)\times \pi$;
\item the second map is induced from the natural map from $\mathrm{St}(\Delta)\times \pi$ to $I^{\vee}_{\Delta}(\pi)$;
\item the third map is the map from the first bullet of Definition \ref{def dual derivative integral}.
\end{itemize}
\end{definition}

\begin{proposition} \label{prop dual commutative}
Let $\Delta_1, \Delta_2$ be segments. Let $\pi$ be an irreducible representation of $G_n$. Then $(\Delta_1, \Delta_2, \pi)$ is a commutative triple if and only if $(\Delta_1^{\vee}, \Delta_2^{\vee}, \pi^{\vee})$ is a dual commutative triple.
\end{proposition}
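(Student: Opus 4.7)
The strategy is to apply the smooth dual functor to the composition appearing in Definition \ref{def commutative triple} and identify the resulting composition, after functorial relabellings, with the one appearing in Definition \ref{def dual commutative triple} applied to $(\Delta_1^\vee, \Delta_2^\vee, \pi^\vee)$. Since a morphism between finitely generated (hence in our setting admissible) smooth representations is non-zero if and only if its smooth dual is non-zero, such an identification of compositions will yield the proposition at once.

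First I would collect the basic duality identifications that are needed: $\mathrm{St}(\Delta)^\vee \cong \mathrm{St}(\Delta^\vee)$; parabolic induction commutes with smooth dual via $(\sigma_1\times\sigma_2)^\vee \cong \sigma_1^\vee \times \sigma_2^\vee$; and, by dualising the defining embedding and surjection and invoking the uniqueness clauses in Definitions \ref{def unique embedd derivative} and \ref{def dual derivative integral}, $D_\Delta(\pi)^\vee \cong D^\vee_{\Delta^\vee}(\pi^\vee)$ and $I_\Delta(\pi)^\vee \cong I^\vee_{\Delta^\vee}(\pi^\vee)$. The functorial Bernstein--Casselman isomorphism in Theorem \ref{thm bernstein-casselman}(1) then identifies the smooth dual of any $N_1$-Jacquet module with the $U_1^-$-Jacquet module of the corresponding smooth dual.

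Next I would examine each arrow of the commutative triple composition in turn. The first arrow $D_{\Delta_1}\circ I_{\Delta_2}(\pi)\boxtimes \mathrm{St}(\Delta_1)\hookrightarrow (I_{\Delta_2}(\pi))_{N_1}$ is the image under Frobenius reciprocity of the defining embedding of $D_{\Delta_1}$; by functoriality of the second adjointness, its smooth dual becomes the adjointness image of the defining surjection of $D^\vee_{\Delta_1^\vee}$, which is precisely the third arrow of the dual composition. The second arrow is obtained by applying $(-)_{N_1}$ to the defining embedding of $I_{\Delta_2}$, and its smooth dual is $(-)_{U_1^-}$ applied to the defining quotient onto $I^\vee_{\Delta_2^\vee}$, which is the second arrow of the dual composition. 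The third arrow is the geometric-lemma quotient onto the filtration piece indexed by the double coset $P_{n_2}P_{n_1}$; here Theorem \ref{thm filtrations coincide bc pairing} is invoked to identify its smooth dual with the embedding of the filtration piece indexed by the opposite cell $P_{n_2}P_{n_1}^-$, which is the first arrow of the dual composition.

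The main obstacle is this last step: one has to verify that the cell and the parabolic labelling appearing in Definition \ref{def dual commutative triple} are precisely those produced by the identification of filtrations in Theorem \ref{thm filtrations coincide bc pairing}, via the order-reversing bijection $\Omega_{S,T}$ of Proposition \ref{prop order reversion}. The remaining two identifications are essentially formal consequences of the functoriality of the second adjointness together with the duality identifications recorded in Step~1, after which one reads off the three dualised arrows and composes them to recover the dual commutative triple composition.
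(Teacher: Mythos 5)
Your proposal is correct and follows essentially the same route as the paper: the paper's proof simply cites Theorem \ref{thm filtrations coincide bc pairing} together with the standard duality facts $(\pi\times\pi')^{\vee}\cong\pi^{\vee}\times\pi'{}^{\vee}$, $I_{\Delta}(\pi)^{\vee}\cong I^{\vee}_{\Delta^{\vee}}(\pi^{\vee})$ and $D_{\Delta}(\pi)^{\vee}\cong D^{\vee}_{\Delta^{\vee}}(\pi^{\vee})$, which is exactly the arrow-by-arrow dualization you spell out. Your extra care with the cell labelling under $\Omega_{S,T}$ and with non-vanishing being preserved under smooth duality (which holds for all smooth representations by exactness and faithfulness of the smooth dual, so admissibility is not even needed) only makes explicit what the paper leaves implicit.
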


\begin{proof}
This follows from Theorem \ref{thm filtrations coincide bc pairing} and the following standard facts: for a segment $\Delta$, and for irreducible representations $\pi$ and $\pi'$, 
\begin{enumerate}
\item $(\pi \times \pi')^{\vee} \cong \pi^{\vee}\times \pi'{}^{\vee}$;
\item $I_{\Delta}(\pi)^{\vee} \cong I^{\vee}_{\Delta^{\vee}}(\pi^{\vee})$;
\item $D_{\Delta}(\pi)^{\vee} \cong D^{\vee}_{\Delta^{\vee}}(\pi^{\vee})$.
\end{enumerate}
We remark that (2) and (3) can also be deduced from (1) in a quite straightforward manner,
\end{proof}

Proposition \ref{prop dual commutative} implies that the notion of dual commutative triples coincides with another notion called LdRi-commutative triples in \cite{Ch22+b}.

\subsection{More on commutativity of derivatives}

An irreducible smooth representation $\pi$ of $G_n$ is said to be {\it thickened} if $\mathrm{lev}(\pi)=\mathrm{lev}({}^-\pi)$. An application of Corollary \ref{cor descending derivatives} is a construction of derivatives under the highest Bernstein-Zelevinsky derivative.

\begin{corollary} \label{cor app on construction of deriv}
Let $\pi \in \mathrm{Irr}(G_n)$ be thickened. Let $\Delta$ be a segment such that $D_{\Delta}(\pi)\neq 0$. Let $\tau=D_{\Delta}(\pi)$. Then ${}^-\tau=D_{\Delta}({}^-\pi)$.
\end{corollary}

\begin{proof}
Here we use a standard fact that if $\pi$ is thickened, then the level of $D_{\Delta}(\pi))$ is equal to the level of $\pi$ (see \cite[Lemma 3.11 and Proposition 1.2]{Ch22+c}). Another standard fact also provides that $I^{\vee}_{\Delta}\circ D_{\Delta}(\pi)\cong \pi$. Hence, $\pi$ is the irreducible quotient of $\mathrm{St}(\Delta)\times \tau$. Now Corollary \ref{cor descending derivatives} implies that ${}^-\pi$ is the irreducible quotient of $\mathrm{St}(\Delta) \times {}^-\tau$ i.e. $I^{\vee}_{\Delta}({}^-\tau)\cong {}^-\pi$ and so ${}^-\tau=D_{\Delta}({}^-\pi)$.
\end{proof}

\end{document}